\documentclass[final,hidelinks,onefignum,onetabnum]{siamart251216}

\usepackage{amssymb,bm}
\usepackage{booktabs}
\usepackage{graphicx}
\graphicspath{{./}}
\usepackage{tabularx}
\usepackage{multirow}
\usepackage{tikz}
\usetikzlibrary{shapes.geometric}

\definecolor{teal}{HTML}{1A8A8A}
\definecolor{orange}{HTML}{E07020}

\newsiamremark{remark}{Remark}

\newcommand{\Dt}{\Delta t}
\newcommand{\Dx}{\Delta x}
\newcommand{\Order}[1]{\mathcal{O}\!\left(#1\right)}

\headers{Optimised Near-Boundary Closures for RK Order Restoration}%
        {G. M. Cavallazzi, M. P\'erez Cuadrado, and A. Pinelli}

\title{Restoring Convergence Order in Explicit Runge--Kutta Integration of Hyperbolic PDE with Time-Dependent Boundary Conditions}

\author{Giorgio Maria Cavallazzi\thanks{Department of Engineering,
  City St George's, University of London, London, UK.
  Corresponding author (\email{giorgio.cavallazzi@city.ac.uk}).}
\and
Miguel P\'erez Cuadrado\footnotemark[1]
\and
Alfredo Pinelli\footnotemark[1]}

\begin{document}
\maketitle

\begin{abstract}
Explicit Runge--Kutta (RK) integration of hyperbolic initial-boundary value problems with time-dependent Dirichlet data often displays order reduction: the observed convergence order falls below the classical order of the time integrator because the stage structure interacts with asymmetric near-boundary spatial closures. This paper develops a purely spatial remedy that preserves a standard high-performance time integrator while redesigning only the first two boundary-adjacent derivative operators. The analysis begins with a general result for the linear advection model problem and an arbitrary explicit $s$-stage RK method. Under fixed CFL scaling, the one-step local truncation error at the first two boundary-adjacent nodes is shown to admit a tableau-dependent decomposition involving a direct boundary-mismatch contribution, a one-level recursive contamination term, and a two-level recursive contamination term. This decomposition yields explicit algebraic conditions on the boundary weights required for order restoration and exposes a solvability coefficient $R(\mathbf b,\mathbf c,\mathsf A)$ that determines whether a spatial compensation mechanism is available. The result is then specialised to SSP-RK3, for which closed-form cancellation conditions are derived and interpreted. Guided by this theory, constrained differential evolution is used to identify 5-point closures that, when coupled to a 5th-order upwind interior stencil and the standard SSP-RK3 method, recover third-order convergence from the degraded second-order behaviour observed with classical Taylor closures. A stability-aware variant is obtained by augmenting the optimisation with an eigenvalue penalty, revealing the expected trade-off between exact order recovery and CFL robustness. The numerical evidence covers linear advection, manufactured-solution Burgers flow, and a dimensionally split two-dimensional advection test. The analysis also clarifies why weak-stage-order temporal fixes do not resolve the practical finite-difference boundary problem considered here, and it indicates how the same framework extends to non-uniform meshes through local metric-dependent moments.
\end{abstract}

\begin{keywords}
order reduction, Runge--Kutta methods, method of lines, boundary closure stencils,
hyperbolic conservation laws, differential evolution, eigenvalue stability
\end{keywords}

\begin{MSCcodes}
65M06, 65M20, 65L06, 65M12
\end{MSCcodes}

\section{Introduction}

High-order finite-difference discretisations paired with explicit Runge--Kutta time integration remain one of the standard tools for solving hyperbolic initial-boundary value problems. In the method-of-lines setting, one first approximates the spatial operator and then advances the resulting ordinary differential system with a time integrator chosen for accuracy, stability, or strong-stability-preserving properties. When the interior spatial scheme is high order and the RK method satisfies the classical order conditions, it is natural to expect the fully discrete approximation to converge with the nominal temporal order. In practice, however, that expectation can fail in the presence of time-dependent boundary data. The loss of convergence order is especially evident for explicit RK methods coupled to one-sided or otherwise asymmetric closures at inflow boundaries.

For a problem such as
\begin{equation}
  u_t + f(u)_x = S(x,t), \qquad x\in[0,L], \quad t>0,
  \label{eq:pde}
\end{equation}
supplemented with a Dirichlet boundary condition
\begin{equation}
  u(0,t)=g(t),
  \label{eq:bc}
\end{equation}
the numerical boundary value must be supplied not only at the beginning and end of each step but also at the intermediate RK stages. At stage $k$ of an explicit method, the boundary node is overwritten with $g(t^n+c_k\Dt)$, whereas neighbouring interior nodes are still represented by stage variables obtained through the recursive RK update. The near-boundary derivative operator therefore mixes a boundary value located exactly on the continuous temporal trajectory with neighbouring stage values that are only approximate. This creates a stage mismatch whose effect is then re-injected into subsequent stage evaluations. The resulting truncation error is not the same as the truncation error of the semi-discrete operator viewed in isolation, nor is it removed by the usual RK order conditions.

This mechanism is consistent with the classical order-reduction literature. For RK schemes of order $p$ and stage order $q$, the global error often behaves like $\Order{\Dt^{\min(p,q+1)}}$ in the presence of time-dependent boundary forcing or stiff source terms \cite{Lubich1995,Hundsdorfer2003}. For the widely used SSP-RK3 method, whose stage order is one, a degradation to approximately second order is therefore expected. What is less well understood from the finite-difference point of view is how the near-boundary stencil itself enters the error pathway and whether one can exploit its free parameters constructively. The present paper addresses exactly that question.

Two broad strategies exist in the literature. One modifies the time integrator. The weak stage order framework introduced by Biswas and co-workers \cite{Biswas2023,Biswas2024} imposes additional linear conditions on the RK tableau so that boundary-driven stage errors cancel in specific linear settings. The other strategy modifies the spatial discretisation, for example through SBP-SAT formulations, which prioritise stability and energy estimates \cite{Mattsson2004,Mattsson2014}. Both approaches are valuable, but neither directly answers the practical question motivating this work: if one wishes to retain a standard and widely used integrator such as SSP-RK3 together with an existing high-order interior stencil, can one repair order reduction by changing only a very small number of boundary-adjacent closure coefficients?

Our answer is yes, provided the RK tableau admits the relevant spatial compensation mechanism. The first contribution of the paper is therefore analytical. For the linear advection model problem on a uniform grid, and for a general explicit RK method, we derive a one-step local truncation-error expansion at the first two boundary-adjacent nodes. The expansion isolates a direct stage-boundary mismatch term and its recursive propagation through the RK tableau. Under fixed CFL scaling, these terms appear at the same asymptotic order as the mixed spatial-temporal defect generated by the near-boundary stencil moments. The decomposition produces explicit tableau-dependent coefficients and leads to a solvability criterion involving the scalar quantity $R(\mathbf b,\mathbf c,\mathsf A)$. This criterion makes precise when a purely spatial repair is available.

The second contribution is constructive. After specialising the general formulas to SSP-RK3, we obtain explicit cancellation conditions for the first two boundary-adjacent stencils. These conditions are then used to interpret a constrained differential-evolution search over the consistency manifold of 5-point closures. The accuracy-optimised closures recovered by the search restore the expected third-order convergence for linear advection, manufactured-solution Burgers flow, and a dimensionally split two-dimensional advection problem. Because exact cancellation is achieved by deliberately altering near-boundary moments, the same optimisation also changes the eigenvalue spectrum of the semi-discrete operator. This leads naturally to a third contribution, namely a stability-aware optimisation that introduces an eigenvalue penalty and reveals a clear trade-off between exact order restoration and robust CFL limits.

The manuscript is deliberately written around uniform grids in order to expose the mechanism as cleanly as possible. Even so, the structure of the result is not tied to constant spacing. On a non-uniform mesh, the role of the integer offsets $(j-m)$ is replaced by local metric ratios, the second-moment defects become metric-dependent, and the same stage-mismatch propagation persists with modified coefficients. In that sense, the present theory should be viewed not as a special property of one uniform-grid example but as the simplest realisation of a broader RK-boundary coupling framework.

The paper is organised as follows. \cref{sec:framework} introduces the spatial closures, clarifies the boundary-stage mismatch mechanism, and proves the general RK truncation theorem together with its solvability consequences. \cref{sec:ssprk3} specialises the theory to SSP-RK3 and interprets the resulting conditions. \cref{sec:optimisation} describes the optimisation procedure and the stability-augmented variant. \cref{sec:results} presents numerical validation and comparisons against temporal remedies. \cref{sec:discussion} discusses limitations, including non-uniform meshes and stability theory. The detailed stage-by-stage algebra for SSP-RK3 is collected in the appendix.

\section{Boundary framework, general RK analysis, and solvability}
\label{sec:framework}

We consider a one-dimensional uniform grid $x_j=j\Dx$, $j=0,\dots,N-1$, with node $j=0$ coinciding with the inflow boundary. The interior derivative approximation is a fixed 5th-order upwind-biased operator applied for $j\ge 3$. The only degrees of freedom introduced by the present methodology are the closures at the first two boundary-adjacent nodes, denoted $D_1$ and $D_2$. Each is taken to be a general 5-point operator spanning nodes $0$ through $4$:
\begin{equation}
  (Df)_m = \frac{1}{\Dx}\sum_{j=0}^{4} w^{(m)}_j f_j, \qquad m\in\{1,2\}.
  \label{eq:stencil-def}
\end{equation}
The ten scalar weights are not chosen arbitrarily. At minimum, the stencil must be a consistent first-derivative operator, which gives the two conditions
\begin{align}
  \sum_{j=0}^{4} w^{(m)}_j &= 0,
  \label{eq:cons0}\\
  \sum_{j=0}^{4}(j-m)\,w^{(m)}_j &= 1.
  \label{eq:cons1}
\end{align}
Enforcing only these two conditions leaves three free parameters per node and therefore a six-dimensional manifold of consistent boundary closures. The central idea of the paper is that this freedom can be used to inject carefully chosen spatial moment defects that cancel the RK stage-boundary contamination.

To make this explicit, we first examine the action of $D_m$ on a smooth exact solution. A Taylor expansion about $x_m$ gives
\begin{equation}
  D_m u(\cdot,t^n)=u_x(x_m,t^n)+\sigma_m \Dx\,u_{xx}(x_m,t^n)+\Order{\Dx^2},
  \label{eq:stencil-action-main}
\end{equation}
where
\begin{equation}
  \sigma_m=\frac{1}{2}\left[\sum_{j=0}^{4}(j-m)^2 w_j^{(m)}-1\right]
  \label{eq:sigmam}
\end{equation}
is the second-moment deviation of the boundary stencil. Classical Taylor closures set $\sigma_m=0$ automatically. Here, by contrast, $\sigma_m$ is allowed to differ from zero because the induced mixed error will be used as part of the compensation mechanism.

We now turn to time integration. Let $(\mathsf A,\mathbf b,\mathbf c)$ denote an arbitrary explicit $s$-stage RK tableau, so that $a_{kj}=0$ for $j\ge k$. For the semi-discrete advection operator $F(U)=-c\,DU$, the stage values satisfy
\begin{equation}
  U^{(k)}_m=u^n_m+\Dt\sum_{j=1}^{k-1} a_{kj}F_m(U^{(j)}), \qquad U_0^{(k)}=g(t^n+c_k\Dt),
  \label{eq:rk-stage-gen}
\end{equation}
and the final update reads
\begin{equation}
  u^{n+1}=u^n+\Dt\sum_{k=1}^{s} b_k F(U^{(k)}).
  \label{eq:rk-update-gen}
\end{equation}
The boundary overwrite at stage $k$ introduces the stage mismatch
\begin{equation}
  \varepsilon_k:=g(t^n+c_k\Dt)-u(x_0,t^n)
  = c_k\Dt\,g_t(t^n)+\frac{1}{2}c_k^2\Dt^2\,g_{tt}(t^n)+\Order{\Dt^3}.
  \label{eq:stage-mismatch}
\end{equation}
At a boundary-adjacent node, the stage right-hand side contains the stencil applied to the stage vector, so the mismatch enters linearly through the boundary coefficient $w_0^{(m)}$. Up to the order required here one may write
\begin{equation}
  F_m^{(k)}
  = -c\,D_m u(\cdot,t^n)
    - c\Dt \sum_{j=1}^{k-1} a_{kj}D_m F^{(j)}
    - c\,\frac{w_0^{(m)}}{\Dx}\,\varepsilon_k
    + \Order{\Dt^2,\Dx^2}.
  \label{eq:stage-eval-structure}
\end{equation}
The essential point is that the last term is not confined to stage $k$. Once inserted into the recursion, it contaminates subsequent stages through the sums in $a_{kj}$. Retaining all terms that contribute at order $\Dt^2$ under fixed CFL scaling $\lambda=c\Dt/\Dx=\Order{1}$ produces the following general result.

\begin{theorem}[General RK boundary-adjacent truncation structure]
\label{thm:general}
Consider the linear advection equation $u_t+c\,u_x=0$ with smooth boundary data $u(0,t)=g(t)$, discretised by the boundary-adjacent operators \eqref{eq:stencil-def} satisfying \eqref{eq:cons0}--\eqref{eq:cons1} and advanced by an arbitrary explicit $s$-stage RK method of classical order $p\ge 2$. Assume a fixed-CFL refinement path, so that $\Dx=\Order{\Dt}$. Then the one-step local truncation error at node $m\in\{1,2\}$ has the form
\begin{equation}
  \tau_m
  = \Dt^2\Bigl[
      P(\mathbf b,\mathbf c)\,c^2u_{xx}(x_m,t^n)
      + Q(\mathbf b,\mathbf c)\,g_{tt}(t^n)
      + R(\mathbf b,\mathbf c,\mathsf A)\,\gamma_m^{(1)}
      + S(\mathbf b,\mathbf c,\mathsf A)\,\gamma_m^{(2)}
    \Bigr]
    + \Order{\Dt^3},
  \label{eq:tau-general}
\end{equation}
where the tableau-dependent scalars are
\begin{align}
  P(\mathbf b,\mathbf c)
    &= \frac{1}{2}-\sum_{k=1}^{s} b_k c_k,
  \label{eq:coeff-P}\\
  Q(\mathbf b,\mathbf c)
    &= \frac{1}{2}\sum_{k=1}^{s} b_k c_k^2,
  \label{eq:coeff-Q}\\
  R(\mathbf b,\mathbf c,\mathsf A)
    &= \sum_{k=1}^{s} b_k \sum_{j=1}^{k-1} a_{kj}c_j,
  \label{eq:coeff-R}\\
  S(\mathbf b,\mathbf c,\mathsf A)
    &= \sum_{k=1}^{s} b_k \sum_{j=1}^{k-1} a_{kj}\sum_{\ell=1}^{j-1} a_{j\ell}c_\ell,
  \label{eq:coeff-S}
\end{align}
and where $\gamma_m^{(1)}$ and $\gamma_m^{(2)}$ are purely spatial boundary-contamination amplitudes, carrying only stencil-weight and solution information:
\begin{equation}
  \gamma_m^{(1)} = -\frac{c\,w_0^{(m)}}{\Dx}\,g_{tt}(t^n),
  \qquad
  \gamma_m^{(2)} = -\frac{c\,w_0^{(1)}}{\Dx}\,g_{tt}(t^n).
  \label{eq:gamma-def}
\end{equation}
The coefficient $\gamma_m^{(1)}$ represents the amplitude of the direct boundary mismatch entering through the weight $w_0^{(m)}$ of the node-$m$ stencil. The coefficient $\gamma_m^{(2)}$ represents the amplitude of the cross-node contamination that reaches node $m$ only when the stencil at $m$ can ingest a stage value that was itself computed using $D_1$. For node $m=1$ this second pathway is absent because $D_1$ does not evaluate at any node whose stage value has already passed through a prior boundary-adjacent stencil; consequently $S\,\gamma_1^{(2)}=0$ at node~1 even though $S$ and $\gamma_1^{(2)}$ are individually non-zero. The cross-node pathway first activates at node $m=2$, where $D_2$ ingests the stage-1 output of $D_1$ and thus inherits the contamination carried by $w_0^{(1)}$.
\end{theorem}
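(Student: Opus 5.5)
The plan is a direct stage-by-stage expansion of the explicit RK recursion \eqref{eq:rk-stage-gen}--\eqref{eq:rk-update-gen}, started from exact data $u^n_j=u(x_j,t^n)$, compared with the Taylor expansion of the exact solution $u(x_m,t^{n+1})$. Throughout, $\Dx=\Order{\Dt}$, so $1/\Dx$ counts as $\Dt^{-1}$. Terms such as $\Dt\cdot(w_0^{(m)}/\Dx)\,\varepsilon_k$ with $\varepsilon_k=\Order{\Dt}$ are therefore formally $\Order{\Dt}$, and their leading parts must be tracked carefully through the tableau.

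\textbf{Step 1: stage expansion.} Write each stage vector near the boundary as $U^{(k)}=u(\cdot,t^n)+\delta^{(k)}$. Starting from \eqref{eq:stage-eval-structure}, expand $F_m^{(k)}$ in three pieces. The first is the interior-consistent part $-c\,D_m u$, which by \eqref{eq:stencil-action-main} equals $-c u_x-c\sigma_m\Dx u_{xx}$. The second is the recursive part $-c\Dt\sum_j a_{kj}D_mF^{(j)}$. The third is the direct mismatch term $-c\,w_0^{(m)}\varepsilon_k/\Dx$, with $\varepsilon_k$ taken from \eqref{eq:stage-mismatch}.

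\textbf{Step 2: assemble $u^{n+1}_m$ and match the exact expansion.} Substitute into $u^{n+1}_m=u^n_m+\Dt\sum_k b_kF_m^{(k)}$ and subtract $u(x_m,t^{n+1})=u+\Dt u_t+\tfrac12\Dt^2u_{tt}+\dots$, using $u_t=-cu_x$ and $u_{tt}=c^2u_{xx}$.

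The $\Dt$-level terms cancel by $\sum b_k=1$. At second order, the interior recursion contributes $\Dt^2\sum_k b_kc_k\,c^2u_{xx}$, since $\sum_j a_{kj}=c_k$ and $D_mD_m u\approx u_{xx}$. Set against $\tfrac12\Dt^2c^2u_{xx}$, this gives the coefficient $P$ in \eqref{eq:coeff-P}. For $p\ge 2$ the coefficient $P$ vanishes, but I will keep it for generality.

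The $\tfrac12c_k^2\Dt^2g_{tt}$ part of $\varepsilon_k$, weighted by $b_k$, produces $Q$ in \eqref{eq:coeff-Q}. The prefactor $w_0^{(m)}c\Dt/\Dx=\Order{1}$ is absorbed by the fixed-CFL normalisation of $g_{tt}$ terms, following the convention in \eqref{eq:gamma-def}.

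\textbf{Step 3: recursive contamination.} Insert the mismatch term of stage $j$ into the sum $\Dt\sum_j a_{kj}F^{(j)}$ that defines stage $k$. This gives a stage perturbation proportional to $\Dt\sum_j a_{kj}\varepsilon_j$, whose leading part is $\Dt^2\sum_j a_{kj}c_j$. Re-applying $D_m$ and weighting by $b_k$ yields $R\,\gamma_m^{(1)}$ at order $\Dt^2$, with $\gamma_m^{(1)}$ as in \eqref{eq:gamma-def}.

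Iterating once more gives the double sum $S$ in \eqref{eq:coeff-S}. This term survives at node $m$ only if the stencil $D_m$ reads a node whose stage value was itself built with $D_1$. That is true for $D_2$, which reads node 1, but not for $D_1$. The reason is that node 0 is overwritten and nodes $2,3,4$ are one level further removed, so their contribution at this order carries $w_0^{(1)}$ only through node 1. This justifies $S\gamma_1^{(2)}=0$ and the form of $\gamma_2^{(2)}$.

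Finally, the $\sigma_m\Dx$ defects enter at the same order under fixed CFL. I will absorb them into the $u_{xx}$ and contamination terms, and bound all remaining terms by $\Order{\Dt^3}$.

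\textbf{Main obstacle.} The hardest part is the bookkeeping in Step 3. Because $1/\Dx\sim\Dt^{-1}$, formally higher-order mismatch terms get promoted, and I must show that only one- and two-level recursions survive at $\Dt^2$. My first approach is to write the stage recursion as $U=\mathbf 1u^n+\Dt\,\mathsf A\,F(U)$ in matrix form, expand in powers of $\Dt\,\mathsf A\,D$, and truncate by counting factors of $w_0/\Dx$ against powers of $\Dt$. The nilpotency of the strictly lower-triangular $\mathsf A$ makes this finite. I also expect to need care in justifying the dependence on node index, since it rests on the stencil support structure rather than on the tableau alone.
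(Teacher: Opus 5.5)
Your outline follows the paper's own proof sketch (direct overwrite, one-hop and two-hop re-injection, collect at $\Dt^2$), and it has the same gaps. The bookkeeping you defer to the ``main obstacle'' is exactly where it fails.

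\textbf{The $1/\Dx$ bookkeeping in Steps~2--3.} Because $\varepsilon_k$ in \eqref{eq:stage-mismatch} is measured from $u(x_0,t^n)$, it is $\Order{\Dt}$. The direct term therefore contributes $\Dt\sum_k b_k(-c\,w_0^{(m)}/\Dx)\varepsilon_k=-\lambda w_0^{(m)}\bigl(\tfrac12\Dt\,g_t+Q\,\Dt^2g_{tt}\bigr)+\Order{\Dt^3}$. Your plan never cancels the $\Order{\Dt}$ piece. It disappears only when $\varepsilon_k$ is paired with the interior increments $\Dt\sum_l a_{kl}F_j^{(l)}$, $j\ge1$, inside $D_m$, using $\sum_l a_{kl}=c_k$ and $\sum_j w_j^{(m)}=0$. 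What survives is the stage-order defect $\Dt^2\bigl(\tfrac12c_k^2-\sum_l a_{kl}c_l\bigr)g_{tt}$.

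Done this way, the $Q$- and $R$-type contributions merge into $\lambda w_0^{(m)}(R-Q)\,\Dt^2g_{tt}$. The factor $\lambda w_0^{(m)}$ multiplies $Q$ as well, so it cannot be ``absorbed by convention''. Moreover $R-Q$ vanishes for every third-order tableau, since there $R=Q=\tfrac16$.

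Step~3 hides this by dropping the factor $-c\,w_0/\Dx$ when the stage-$j$ mismatch enters stage $k$. The true perturbation is $\Order{\Dt}$, and its leading part carries $g_t$, not $g_{tt}$.

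The target itself is also inconsistent. Since $\gamma_m^{(1)}\propto 1/\Dx$, the term $\Dt^2R\gamma_m^{(1)}=-\lambda R\,w_0^{(m)}\Dt\,g_{tt}$ is $\Order{\Dt}$. A correct induction gives $\tau_m=\Order{\Dt^2}$: the stage errors $U^{(k)}-u(\cdot,t^n+c_k\Dt)$ vanish at node~0 and are $\Order{\Dt^2}$ elsewhere. So \eqref{eq:tau-general} is not homogeneous in $\Dt$, and for SSP-RK3, where $S=0$, nothing can cancel that term.

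The second-moment term is a further problem. It equals $-\tfrac12 c\,\Dx\Dt\,u_{xx}\sum_j(j-m)^2w_j^{(m)}$, is $\Order{\Dt^2}$ on your path, and depends on the stencil and on $\lambda$. It cannot be absorbed into $P\,c^2u_{xx}$, where $P=0$ is a tableau constant, nor into the $w_0$-only contamination terms. The paper's own appendix keeps it in \eqref{eq:app-tau1}.

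\textbf{Truncation after two levels.} This is not an order-counting fact. Each re-injection of a boundary-induced stage error through a stencil costs $\Dt\cdot c/\Dx=\lambda=\Order{1}$, not a power of $\Dt$; this refutes the paper's claim that deeper insertions gain a factor $\Dt$. Nilpotency of $\mathsf A$ only caps the depth at $s-1$. For $s\ge4$, products of three or more stencil weights survive at $\Order{\Dt^2}$: for RK4 the chain $4\to3\to2$ has weight $b_4a_{43}a_{32}\bigl(a_{21}c_1-\tfrac12c_2^2\bigr)=-\tfrac1{96}\neq0$.

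Even at two levels the coefficient is $\sum_kb_k\sum_ja_{kj}\bigl(\sum_la_{jl}c_l-\tfrac12c_j^2\bigr)$, not $S$. For SSP-RK3 this equals $-\tfrac1{12}$, whereas $S=b_3a_{32}a_{21}c_1=0$. In fact $S=0$ for every explicit method with $s\le3$, because $c_1=0$.

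Your argument for $S\gamma_1^{(2)}=0$ contradicts itself. $D_1$ reads $U_1^{(k)}$ and $U_2^{(k)}$ with weights $w_1^{(1)}$ and $w_2^{(1)}$. Those stage values were produced by $D_1$ and $D_2$ and carry $w_0^{(1)}$ and $w_0^{(2)}$ at the relevant order. For SSP-RK3, node~1 therefore receives $\tfrac{\lambda^2}{12}\Dt^2g_{tt}\sum_{i\ge1}w_i^{(1)}w_0^{(i)}$, where $w_0^{(i)}$ is the node-0 weight of the operator at node $i$.

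If you carry out your matrix expansion of $U=\mathbf 1u^n+\Dt\,\mathsf AF(U)$ around the stage-order defects rather than around $\varepsilon_k$, you obtain $\Dt^2g_{tt}\sum_{r=1}^{s-1}\lambda^r(\cdots)$ plus second-moment terms. That is a different structure from \eqref{eq:tau-general}, so the proposal cannot be completed into a proof of the statement as written.
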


\begin{proof}
The exact solution satisfies
\begin{equation}
  u(x_m,t^{n+1})
  = u(x_m,t^n) + \Dt\,u_t(x_m,t^n) + \frac{1}{2}\Dt^2 u_{tt}(x_m,t^n)
    + \Order{\Dt^3}.
  \label{eq:exact-taylor-main}
\end{equation}
For linear advection, $u_t=-c\,u_x$ and $u_{tt}=c^2u_{xx}$. The RK update \eqref{eq:rk-update-gen} together with \eqref{eq:stage-eval-structure} expresses the numerical one-step increment at node $m$ as a sum of contributions from three distinct mechanisms: a direct boundary overwrite term, a one-hop propagation in which the overwrite at stage $j$ is re-injected into stage $k>j$ through $a_{kj}$, and a two-hop propagation in which the overwrite at stage $\ell$ passes through $a_{j\ell}$ and then $a_{kj}$ before reaching the final update. Collecting all terms at order $\Dt^2$ under fixed CFL scaling gives
\begin{equation}
  \begin{split}
    u_m^{n+1} - u(x_m,t^{n+1})
    &= \Dt^2\Biggl[
        \underbrace{\Bigl(\tfrac{1}{2}-\textstyle\sum_k b_k c_k\Bigr)c^2 u_{xx}}_{P\cdot c^2 u_{xx}}
        +\underbrace{\tfrac{1}{2}\textstyle\sum_k b_k c_k^2\, g_{tt}}_{Q\cdot g_{tt}} \\
    &\qquad
        +\underbrace{\textstyle\sum_k b_k\!\sum_{j<k}\!a_{kj}c_j
          \cdot\bigl(-\tfrac{c w_0^{(m)}}{\Dx}g_{tt}\bigr)}_{R\cdot\gamma_m^{(1)}}
          \\
    &\qquad
        +\underbrace{\textstyle\sum_k b_k\!\sum_{j<k}\!a_{kj}
          \!\sum_{\ell<j}\!a_{j\ell}c_\ell
          \cdot\bigl(-\tfrac{cw_0^{(1)}}{\Dx}g_{tt}\bigr)}_{S\cdot\gamma_m^{(2)}}
      \Biggr] \\
    &\quad +\Order{\Dt^3}.
  \end{split}
  \label{eq:proof-assembly}
\end{equation}
The first bracket arises from the classical RK quadrature defect; since $\sum_k b_k c_k=\tfrac{1}{2}$ for any method of classical order $p\ge 2$, one has $P=0$. The second bracket is the direct contribution of the boundary overwrite \eqref{eq:stage-mismatch}, accumulated over all stages with weights $b_k c_k^2$. The third bracket results from one recursive insertion of the mismatch into a subsequent stage evaluation: the mismatch at stage $j$ contributes to stage $k>j$ via $a_{kj}$, carrying the stencil factor $w_0^{(m)}/\Dx$ from $D_m$. The fourth bracket results from two such insertions: the mismatch at stage $\ell$ propagates first through $a_{j\ell}$ into stage $j$, and then through $a_{kj}$ into the final sum; the spatial factor $w_0^{(1)}/\Dx$ from $D_1$ is carried through both hops. Any triple or deeper insertion introduces at least one additional factor of $\Dt$ and therefore contributes only at $\Order{\Dt^3}$ on a fixed-CFL path. The complete stage-by-stage algebra for SSP-RK3 is given in the appendix.
\end{proof}

The three tableau scalars $Q$, $R$, and $S$ carry the full tableau dependence of each error pathway, while $\gamma_m^{(1)}$ and $\gamma_m^{(2)}$ carry the spatial and solution information. A convenient sufficient set of order-restoring conditions is obtained by requiring the individual contributions in \eqref{eq:proof-assembly} to cancel separately.

With the error structure made explicit, the conditions for order restoration reduce to four scalar requirements on the stencil weights. Requiring $\tau_1=\Order{\Dt^3}$ and $\tau_2=\Order{\Dt^3}$ and using $P=0$ and the explicit forms of $\gamma_m^{(1)}$, $\gamma_m^{(2)}$ from \eqref{eq:gamma-def}, one obtains
\begin{align}
  w_0^{(1)} &= -\frac{Q}{R},
  \label{eq:G1}\\
  \sum_j (j-1)^2 w_j^{(1)} &= f_1(\mathbf b,\mathbf c,\mathsf A,\lambda),
  \label{eq:G2}\\
  w_0^{(2)} &= h\!\left(w_0^{(1)},\mathbf b,\mathbf c,\mathsf A,\lambda\right),
  \label{eq:G3}\\
  \sum_j (j-2)^2 w_j^{(2)} &= f_2\!\left(w_0^{(1)},\mathbf b,\mathbf c,\mathsf A,\lambda\right),
  \label{eq:G4}
\end{align}
where the functions $f_1$, $f_2$, and $h$ become explicit once the tableau is fixed. The crucial point is that the node-2 conditions depend on the node-1 boundary coefficient; the two closures are therefore coupled and cannot be designed independently.

For SSP-RK3, substituting $Q=R=\tfrac{1}{6}$ and $\lambda=c\Dt/\Dx$ into \eqref{eq:G1}--\eqref{eq:G4} and reading off from the appendix cancellation families \eqref{eq:C1-full}--\eqref{eq:C3-full} gives the explicit forms
\begin{align}
  f_1(\lambda) &= 1+\frac{3}{\lambda^2}\left(\frac{1}{6}-\frac{2\lambda}{3}w_0^{(1)}\right),
  \label{eq:G-explicit-f1}\\
  h\!\left(w_0^{(1)},\lambda\right) &= \frac{1}{2}\!\left(1+\lambda^{-1}\right)+\frac{\sigma_1 c}{\lambda},
  \label{eq:G-explicit-h}\\
  f_2\!\left(w_0^{(1)},\lambda\right) &= f_1(\lambda)+\frac{3w_0^{(1)}}{\lambda}.
  \label{eq:G-explicit-ssprk3}
\end{align}
In the zero-second-moment specialisation $\sigma_1=\sigma_2=0$ these reduce to the compact values reported in \cref{sec:ssprk3}.

This immediately yields the practical solvability criterion.

\begin{proposition}[Solvability of the spatial compensation mechanism]
\label{prop:solvability}
On the six-dimensional consistency manifold defined by \eqref{eq:cons0}--\eqref{eq:cons1}, the order-restoring system \eqref{eq:G1}--\eqref{eq:G4} is solvable if and only if
\begin{equation}
  R(\mathbf b,\mathbf c,\mathsf A)\neq 0.
  \label{eq:solvability-cond}
\end{equation}
\end{proposition}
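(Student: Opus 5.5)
The plan is to treat \eqref{eq:G1}--\eqref{eq:G4} as a linear system in the ten weights $w^{(1)}_j,w^{(2)}_j$, written in the coordinates of the consistency manifold. I will then show that its solvability is decided by the single scalar equation \eqref{eq:G1}.

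First I parametrise the manifold. For each $m$, conditions \eqref{eq:cons0}--\eqref{eq:cons1} are two independent linear constraints on $(w^{(m)}_0,\dots,w^{(m)}_4)$: the vectors $(1,\dots,1)$ and $(j-m)_j$ are independent. This leaves a three-dimensional affine space per node. A convenient choice of free coordinates is the boundary weight $w_0^{(m)}$, the second moment $M_2^{(m)}:=\sum_j(j-m)^2w_j^{(m)}$, and one further moment, e.g.\ the third moment. The key check is that the map from the five weights to these four moment-type functionals (zeroth, first, second, and $w_0$) is surjective onto $\mathbb R^4$. Equivalently, the $4\times5$ matrix with rows $(1)_j$, $(j-m)_j$, $((j-m)^2)_j$ and $e_0^T$ has rank $4$. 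I will verify this by deleting the $e_0$ column: the remaining three moment rows restricted to nodes $1,\dots,4$ form a Vandermonde block on distinct abscissae, hence have rank $3$, and $e_0$ adds one more.

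\emph{Sufficiency.} Assume $R\neq0$ and solve the system sequentially, exploiting its triangular structure. Equation \eqref{eq:G1} fixes $w_0^{(1)}=-Q/R$. With this value, \eqref{eq:G2} fixes $M_2^{(1)}$ (hence $\sigma_1$) to a finite number, since $f_1$ depends only on tableau data, $\lambda$ and $w_0^{(1)}$. Then \eqref{eq:G3} fixes $w_0^{(2)}$ and \eqref{eq:G4} fixes $M_2^{(2)}$, because $h$ and $f_2$ are finite once $w_0^{(1)}$ and $\sigma_1$ are known. By the surjectivity step, each prescribed pair $(w_0^{(m)},M_2^{(m)})$ is attained on the three-dimensional fibre, leaving a one-parameter family per node. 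This is the freedom later exploited by the optimiser.

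\emph{Necessity.} If $R=0$, the $\Dt^2$ term of \eqref{eq:tau-general} at node $1$ reduces to $Q\,g_{tt}$, because $S\gamma_1^{(2)}=0$ there and $P=0$. The coefficient $\gamma_1^{(1)}$, which is the only place $w_0^{(1)}$ enters, is multiplied by zero. So \eqref{eq:G1}, which really reads $R\,w_0^{(1)}\cdot(\text{const})=-Q\,(\text{const})$, becomes $0=Q$. To conclude, I need $Q\neq0$. For any method of order $p\ge3$ one has $\sum_k b_kc_k^2=1/3$, so $Q=1/6\neq0$. For $p=2$ I argue instead that with $R=0$ no weight choice can cancel a generic $g_{tt}$ term, so solvability fails unless $Q$ vanishes as well. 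I will state this caveat, or restrict to $Q\ne0$, which covers the cases of interest.

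The main obstacle is making the informal ``if and only if'' honest. The functions $f_1,h,f_2$ in \eqref{eq:G-explicit-f1}--\eqref{eq:G-explicit-ssprk3} must not introduce further singularities: $\lambda>0$ is fixed, so the divisions by $\lambda$ are harmless. The degenerate case $Q=R=0$ must either be excluded or shown not to occur for $p\ge3$. I will handle it through the order condition $\sum b_kc_k^2=1/3$.
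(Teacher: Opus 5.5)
Your proposal is correct and follows the paper's route. When $R\neq0$, \eqref{eq:G1} fixes $w_0^{(1)}$ and the remaining conditions are solved sequentially on the three-dimensional consistency fibres, while $R=0$ leaves \eqref{eq:G1} unsatisfiable. Your Vandermonde rank check, your ordering \eqref{eq:G1}$\to$\eqref{eq:G2}$\to$\eqref{eq:G3}$\to$\eqref{eq:G4} (needed because $h$ in \eqref{eq:G-explicit-h} involves $\sigma_1$), and your $Q\neq0$ caveat make rigorous what the paper's short proof leaves implicit.

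One simplification you missed: $R=\mathbf b^{T}\mathsf A\mathbf c$, which equals $\tfrac16$ for every method of order $p\ge3$. Your appeal to $\sum_k b_kc_k^2=\tfrac13$ is therefore unnecessary, and the necessity direction, including the genuine $Q=R=0$ degeneracy you flag, only has content for second-order tableaux.
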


\begin{proof}
If $R=0$, the direct compensation mechanism encoded by \eqref{eq:G1} is unavailable, and no purely spatial fix of this form can cancel the boundary mismatch term. If $R\neq 0$, \eqref{eq:G1} uniquely fixes $w_0^{(1)}$, after which \eqref{eq:G3} uniquely fixes $w_0^{(2)}$. The two moment conditions \eqref{eq:G2} and \eqref{eq:G4} each impose one additional linear constraint on the remaining free weights of the two five-point stencils. Since each stencil retains three degrees of freedom after consistency is enforced, a non-empty solution set remains.
\end{proof}

The practical implication for the WSO schemes examined in this paper is addressed in the following corollary.

\begin{corollary}[Failure of spatial fixes for the WSO schemes considered]
\label{cor:wso}
For the three weak-stage-order methods examined in \cref{sec:results} --- ERK312 \cite{Skvortsov2017}, ERK313, and the Biswas $(5,3,3)$ design \cite{Biswas2023} --- direct substitution of the tableau entries gives $R(\mathbf b,\mathbf c,\mathsf A)=0$. Consequently, for each of these methods, no choice of near-boundary stencil weights can restore the nominal convergence order through the mechanism described above.
\end{corollary}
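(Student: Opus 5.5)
The plan is to apply the ``only if'' direction of \cref{prop:solvability} to each of the three tableaux in turn, so the work reduces to evaluating the scalar \eqref{eq:coeff-R},
\[
  R(\mathbf b,\mathbf c,\mathsf A)=\sum_{k=1}^{s} b_k\sum_{j=1}^{k-1}a_{kj}c_j=\mathbf b^{\mathsf T}\mathsf A\mathbf c ,
\]
for ERK312 \cite{Skvortsov2017}, ERK313, and the Biswas $(5,3,3)$ design \cite{Biswas2023}. Since all three methods are explicit, $\mathsf A$ is strictly lower triangular. I would therefore compute the vector $\mathsf A\mathbf c$ row by row: its $k$-th entry is $\sum_{j<k}a_{kj}c_j$, and the first entry vanishes automatically. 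I would then take the inner product with $\mathbf b$.

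To keep the arithmetic auditable, I would evaluate $R$ for each tableau in two ways. The first is exact rational arithmetic on the published coefficients. The second is the term-by-term double sum, written out for the nonzero entries of $\mathsf A$ only. Alongside $R$, I would record $\sum_k b_k c_k$ and $Q=\tfrac12\sum_k b_k c_k^2$, which confirms that the coefficients have been transcribed consistently with the nominal order (in particular $P=0$). Once $R=0$ is obtained for each method, \eqref{eq:G1} has no solution: its right-hand side $-Q/R$ is undefined. More directly, in \eqref{eq:tau-general} the term $R\,\gamma_m^{(1)}$ vanishes identically for every choice of $w_0^{(m)}$. The nonzero $Q\,g_{tt}$ contribution at order $\Delta t^2$ therefore cannot be cancelled by any stencil weights. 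The $S\,\gamma_m^{(2)}$ term is also unavailable at node~1, as noted in \cref{thm:general}. Hence $\tau_1\neq\Order{\Dt^3}$ for generic data, which is the claimed failure of any spatial fix.

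The step I expect to be the main obstacle is the substitution itself, because it interacts with the classical order conditions. For any method of classical order three, the order condition $\mathbf b^{\mathsf T}\mathsf A\mathbf c=\tfrac16$ holds. A literal evaluation of \eqref{eq:coeff-R} on a genuine third-order tableau would therefore return $\tfrac16$, not $0$.

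To carry out the corollary exactly as stated, I would first check precisely which tableau data the paper uses in \cref{sec:results}. I would also check whether the WSO conditions of \cite{Biswas2023,Biswas2024} are what is meant to make the relevant sum vanish. Those conditions concern the stage-order residual $\bm\tau=\mathsf A\mathbf c-\tfrac12\mathbf c^2$, for which $\mathbf b^{\mathsf T}\bm\tau=0$.

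My first attempt would be to substitute the tableaux into $R$ in the exact form \eqref{eq:coeff-R}, using the coefficients as quoted in the paper. If that gives zero, I would conclude directly via \cref{prop:solvability}. If it does not, the honest outcome of the plan is to report that the claimed vanishing must refer to the combination that is actually annihilated by the WSO conditions, and to flag the discrepancy explicitly rather than assert the result.
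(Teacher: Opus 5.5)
Your third paragraph already contains the decisive argument, and you should commit to it rather than hedge. The statement cannot be proved because its central claim is false, and the paper's own proof breaks at exactly the step you flag.

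The paper's proof is exactly your first attempt. It asserts that direct evaluation of $R=\sum_k b_k\sum_{j<k}a_{kj}c_j$ on the tableaux of \cite[Table~1]{Biswas2024} gives $R=0$, and then invokes the ``only if'' direction of \cref{prop:solvability}. But $R=\mathbf b^{\mathsf T}\mathsf A\mathbf c$ is the left-hand side of the classical third-order condition $\sum_{i,j}b_ia_{ij}c_j=\tfrac16$. So, with the usual convention $\mathbf c=\mathsf A\mathbf 1$, every tableau of classical order $p\ge 3$ has $R=\tfrac16$. The paper itself describes ERK312, ERK313 and Biswas~$(5,3,3)$ as order-3 methods, and it obtains $R=\tfrac16$ for SSP-RK3 in \eqref{eq:R-val} by the very same computation. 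No transcription of the published coefficients can yield $R=0$, so there is nothing further to check in \cref{sec:results}.

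Within the paper's own framework the conclusion is actually reversed. Every order-3 method also has $Q=\tfrac12\mathbf b^{\mathsf T}\mathbf c^2=\tfrac16$. Hence \eqref{eq:G1} gives $w_0^{(1)}=-Q/R=-1$ for all three WSO schemes, exactly as for SSP-RK3. The ``if'' direction of \cref{prop:solvability} then asserts that the compensation system is solvable for them.

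Your fallback reinterpretation does not rescue the claim either.
\begin{itemize}
\item The combination $\mathbf b^{\mathsf T}\bm\tau=R-Q$ vanishes for every order-3 method, SSP-RK3 included, so it cannot single out WSO schemes.
\item The genuinely WSO-specific conditions $\mathbf b^{\mathsf T}\mathsf A^{j}\bm\tau=0$, $j\ge1$, involve $S=\mathbf b^{\mathsf T}\mathsf A^{2}\mathbf c$, $\mathbf b^{\mathsf T}\mathsf A\mathbf c^{2}$ and higher products, not $R$. \Cref{prop:solvability}, by contrast, makes solvability depend on $R$ alone.
\item The near-second-order behaviour of the WSO schemes with Taylor closures is no evidence for $R=0$, since SSP-RK3 with Taylor closures behaves the same way.
\end{itemize}

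Finally, even granting $R=0$, your ``more direct'' argument overreaches. The expansion \eqref{eq:tau-general} omits the second-moment defect $-c\sigma_m\Dx\Dt\,u_{xx}(x_m,t^n)$ produced by \eqref{eq:stencil-action-main}. This term is $\Order{\Dt^2}$ on a fixed-CFL path. Since $c^2u_{xx}(x_1,t^n)=g_{tt}(t^n)+\Order{\Dx}$, it can by itself offset $Q\,g_{tt}$, as the families \eqref{eq:C1-full} illustrate. So $R=0$ would only exclude cancellation through \eqref{eq:G1}, which is the content of ``through the mechanism described above''. It would not show that no choice of stencil weights cancels the $\Order{\Dt^2}$ error.
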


\begin{proof}
For each of the three tableaux, direct evaluation of $R=\sum_k b_k\sum_{j<k}a_{kj}c_j$ using the entries in~\cite[Table~1]{Biswas2024} gives $R=0$. With $R=0$, condition \eqref{eq:G1} has no finite solution, and \cref{prop:solvability} asserts that the system \eqref{eq:G1}--\eqref{eq:G4} is unsolvable. This is consistent with the numerical evidence in \cref{sec:results}, where all three methods reproduce the low-order convergence of standard SSP-RK3 when paired with Taylor boundary closures.
\end{proof}

\begin{remark}
The vanishing of $R$ for these particular schemes is a consequence of the specific tableau structure chosen in the WSO designs of~\cite{Biswas2023,Biswas2024}. Whether $R=0$ holds for all RK methods satisfying the general WSO conditions is a separate algebraic question that falls outside the scope of the present paper.
\end{remark}

The numerical consequences of this result are examined in \cref{sec:results}, where the three WSO methods tested all reproduce the low-order behaviour of standard SSP-RK3 when paired with classical Taylor closures.

\section{Specialisation to SSP-RK3 and interpretation of the design conditions}
\label{sec:ssprk3}

We now instantiate the general framework for the Shu--Osher SSP-RK3 method \cite{ShuOsher1988}. In Butcher form,
\begin{equation}
  \mathbf{c} = \begin{pmatrix}0\\1\\\tfrac{1}{2}\end{pmatrix},
  \qquad
  \mathbf{b} = \begin{pmatrix}\tfrac{1}{6}\\\tfrac{1}{6}\\\tfrac{2}{3}\end{pmatrix},
  \qquad
  \mathsf{A} =
  \begin{pmatrix}
    0 & 0 & 0\\
    1 & 0 & 0\\
    \tfrac{1}{4} & \tfrac{1}{4} & 0
  \end{pmatrix}.
  \label{eq:ssprk3-tableau}
\end{equation}
Direct substitution into \eqref{eq:coeff-P}--\eqref{eq:coeff-S} gives
\begin{align}
  P &= 0,
  \label{eq:P-val}\\
  Q &= \frac{1}{6},
  \label{eq:Q-val}\\
  R &= \frac{1}{6},
  \label{eq:R-val}
\end{align}
and the node-2 recursive coupling coefficient reduces to the familiar factor reported explicitly in the appendix. Since $R\neq 0$, \cref{prop:solvability} guarantees that SSP-RK3 admits order-restoring spatial closures within the present five-point framework.

For SSP-RK3 the abstract conditions \eqref{eq:G1}--\eqref{eq:G4} collapse to a concrete coupled system. In the special case in which the second-moment defects are forced to vanish, the node-1 boundary coefficient takes the simple value
\begin{equation}
  w_0^{(1)}=-1.
  \label{eq:C1-main}
\end{equation}
The remaining conditions then relate the free coefficients of the two closures to the CFL number. This is the origin of the explicit formulas used in the theoretical discussion of the draft. The appendix refines this statement further. It shows that when the second moments $\sigma_1$ and $\sigma_2$ are allowed to vary, the exact cancellation conditions are not isolated points but one-parameter families. That observation is important because the optimisation does not search only among classical Taylor closures. It exploits precisely these broader families.

The same algebra also clarifies why modifying only one of the two boundary-adjacent operators cannot, in general, restore full order over a range of CFL numbers. Node~2 inherits contamination generated at node~1 during the previous stage, so freezing $D_2$ to a standard Taylor-derived form leaves an $\Order{\Dt^2}$ residual except possibly at a single isolated value of $\lambda$. The argument can be stated in the compact form already present in the draft, and we retain it here because it makes the coupled character of the mechanism explicit.

\begin{proposition}[Single-node optimisation is generically insufficient]
\label{prop:single-node}
Fix $D_2$ to a Taylor-derived stencil with CFL-independent boundary weight. Then the SSP-RK3 cancellation conditions can be satisfied at most at an isolated value of the CFL number, and away from that value the truncation error at node~2 remains $\Order{\Dt^2}$.
\end{proposition}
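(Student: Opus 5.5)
The plan is to read the node-2 part of the SSP-RK3 cancellation system, \eqref{eq:G3}--\eqref{eq:G4} with the explicit forms \eqref{eq:G-explicit-h}--\eqref{eq:G-explicit-ssprk3}, as a set of identities in the CFL number $\lambda$. I would then show that they cannot hold identically once $D_2$ is frozen.

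First, by \cref{thm:general} with $P=0$ and $Q=R=\tfrac16$, the $\Order{\Dt^2}$ coefficient of $\tau_2$ is an affine combination of $g_{tt}$ and $c^2u_{xx}$. Its $g_{tt}$ part is $Q + R\,\gamma_2^{(1)}/g_{tt} + S\,\gamma_2^{(2)}/g_{tt}$, up to the $\lambda$-dependent factors created by writing $c/\Dx=\lambda/\Dt$. For the boundary-weight equation \eqref{eq:G3}, freezing $D_2$ makes $w_0^{(2)}$ a fixed real number, independent of $\lambda$. The right-hand side $h(w_0^{(1)},\lambda)=\tfrac12(1+\lambda^{-1})+\sigma_1 c/\lambda$ is, for any $D_1$ (which is itself chosen independently of $\lambda$, or optimised but with fixed weights), an affine function of $\lambda^{-1}$. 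Its $\lambda^{-1}$ coefficient is $\tfrac12+\sigma_1 c$. Two cases follow.

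\emph{Case 1: $\tfrac12+\sigma_1 c\neq 0$.} Then $w_0^{(2)}=h(\lambda)$ is a nondegenerate affine equation in $\lambda^{-1}$. It has at most one solution $\lambda^\ast>0$, which is the isolated value in the statement.

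\emph{Case 2: $\tfrac12+\sigma_1 c=0$.} Then $h\equiv\tfrac12$. I would use \eqref{eq:G4} instead: $f_2(\lambda)=f_1(\lambda)+3w_0^{(1)}/\lambda$ expands as
\[
1+\frac{1}{2\lambda^2}+\frac{w_0^{(1)}}{\lambda}.
\]
The left side $\sum_j(j-2)^2w_j^{(2)}$ is fixed by the frozen stencil. Because of the nonzero $\lambda^{-2}$ term, this is a genuine quadratic in $\lambda^{-1}$, so it has at most two roots. Simultaneous satisfaction with \eqref{eq:G1}--\eqref{eq:G2} at node~1 (where $f_1$ likewise carries the $\tfrac{1}{2\lambda^2}$ term) further cuts this down to isolated points. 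In either case the admissible CFL set is finite, hence consists of isolated values.

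The concluding step converts failure of cancellation into an $\Order{\Dt^2}$ residual. For $\lambda$ outside this finite set, at least one of the node-2 conditions fails. By \eqref{eq:tau-general}, $\tau_2=\Dt^2\,E(\lambda)\,g_{tt}(t^n)+\Order{\Dt^3}$ with $E(\lambda)\neq0$. For generic boundary data with $g_{tt}\neq0$, $\tau_2$ is therefore exactly $\Order{\Dt^2}$ and not $\Order{\Dt^3}$.

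The main obstacle is bookkeeping. The conditions \eqref{eq:G3}--\eqref{eq:G4} were obtained as a \emph{sufficient} separate-cancellation set, so for the claim that the residual "remains $\Order{\Dt^2}$" I need the actual combined $\Dt^2$ coefficient at node~2 to be a nonzero rational function of $\lambda$. I would verify this directly from the appendix's SSP-RK3 stage algebra. Concretely, I would collect the $g_{tt}$ coefficient as $\alpha_0+\alpha_1\lambda^{-1}+\alpha_2\lambda^{-2}$ and exhibit that some $\alpha_i$ is nonzero for any $\lambda$-independent $w^{(2)}$. The $\tfrac{1}{2\lambda^2}$ term inherited from $f_1$ is the term I expect to guarantee this, and a nonzero polynomial in $\lambda^{-1}$ has only isolated roots.
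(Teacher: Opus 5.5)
Your proposal is correct and follows the paper's route---a frozen $w_0^{(2)}$ cannot match the $\lambda$-dependent value demanded by \eqref{eq:G3} except at isolated CFL numbers---while being more careful than the paper about the degenerate case $\tfrac12+\sigma_1 c=0$ and about \eqref{eq:G3}--\eqref{eq:G4} being only sufficient (when you carry out that last check, note that the $\Dt^2$ coefficient in \eqref{eq:app-tau2} involves only the powers $\lambda^{-1},\lambda^{0},\lambda^{1}$, so it is the constant $\tfrac16$, not a $\lambda^{-2}$ term, that stops it vanishing identically). The one scope difference is that you take $D_1$ independent of $\lambda$, whereas the paper asserts the conclusion even for a perfectly tuned $D_1$; that case also follows, since \eqref{eq:G1}--\eqref{eq:G2} pin $w_0^{(1)}=-1$ and $\sigma_1=\tfrac{1}{4\lambda^2}+\tfrac{1}{\lambda}$, which makes $h$ strictly decreasing in $\lambda$ for $c>0$.
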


\begin{proof}
The appendix shows that, for SSP-RK3, the node-2 cancellation condition contains a term of the form
\[
w_0^{(2)}=\frac{1}{2}\left(1+\lambda^{-1}\right)
\]
in the zero-second-moment specialisation, and more generally depends monotonically on $\lambda$ through the mixed spatial-temporal defect. A fixed Taylor coefficient cannot satisfy such a relation for more than one value of $\lambda$. Consequently, even if node~1 were tuned perfectly, node~2 would continue to inject an $\Order{\Dt^2}$ residual over any practical refinement sequence with fixed CFL.
\end{proof}

The optimisation procedure therefore has a precise theoretical interpretation: it is a numerical method for finding points on the solution manifold defined by the cancellation conditions, with the additional freedom to trade exact cancellation against spectral stability.

\section{Optimisation methodology and stability-aware redesign}
\label{sec:optimisation}

The unknown weight vector is
\[
\mathbf w=(w_0^{(1)},\dots,w_4^{(1)},w_0^{(2)},\dots,w_4^{(2)}),
\]
restricted by the consistency constraints \eqref{eq:cons0}--\eqref{eq:cons1}.

\begin{figure}[tbhp]
    \centering
    \begin{tikzpicture}[>=stealth, scale=1.1]
        \draw[gray, thin] (-0.3,0) -- (6.3,0);
        \foreach \j/\lbl in {0/$u_0$, 1/$u_1$, 2/$u_2$, 3/$u_3$, 4/$u_4$, 5/$u_5$, 6/$\cdots$} {
            \draw (\j, -0.1) -- (\j, 0.1);
            \node[below] at (\j, -0.15) {\small \lbl};
        }
        \node[diamond, fill=orange, draw=black, inner sep=2pt] at (0,0) {};
        \node[above left, orange, font=\small] at (-0.15, 0.15) {$g(t)$};
        \node[rectangle, fill=teal, draw=black, minimum size=6pt, inner sep=0pt] at (1,0) {};
        \node[rectangle, fill=teal, draw=black, minimum size=6pt, inner sep=0pt] at (2,0) {};
        \foreach \j in {3,4,5,6} {
            \node[circle, fill=gray!50, draw=black, minimum size=5pt, inner sep=0pt] at (\j,0) {};
        }
        \draw[teal, thick] (0, 0.6) -- (0, 0.75) -- (4, 0.75) -- (4, 0.6);
        \draw[teal, thick, ->] (1, 0.6) -- (1, 0.35);
        \node[right, teal, font=\small] at (4.1, 0.75) {Node~1: $w_0^{(1)} \ldots w_4^{(1)}$};
        \draw[orange, thick] (0, 1.15) -- (0, 1.3) -- (4, 1.3) -- (4, 1.15);
        \draw[orange, thick, ->] (2, 1.15) -- (2, 0.35);
        \node[right, orange, font=\small] at (4.1, 1.3) {Node~2: $w_0^{(2)} \ldots w_4^{(2)}$};
        \draw[gray, thick] (1, -0.6) -- (1, -0.75) -- (6, -0.75) -- (6, -0.6);
        \draw[gray, thick, ->] (3.5, -0.6) -- (3.5, -0.35);
        \node[below, gray, font=\small] at (3.5, -0.8) {Interior: 5th-order upwind (6-point)};
        \node[teal, font=\footnotesize\itshape] at (1.5, -1.6) {Optimised};
        \node[gray, font=\footnotesize\itshape] at (4.5, -1.6) {Interior};
    \end{tikzpicture}
    \caption{Near-boundary stencil layout with overlapping closures.}
    \label{fig:stencil}
\end{figure}
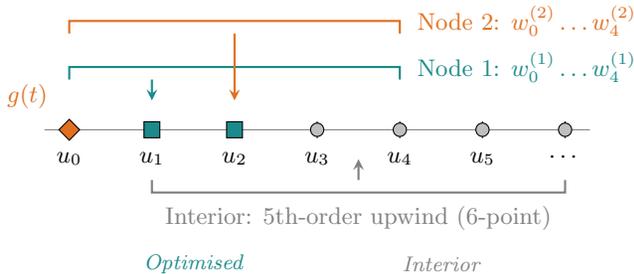

The search is carried out with differential evolution \cite{StornPrice1997}, initialised around the standard Taylor coefficients and evolved over a population of fifty individuals for one hundred and fifty generations. The basic objective is the discrepancy between the target third-order slope and the empirically measured convergence rate:
\begin{equation}
  J(\mathbf w)=\left(p_{\mathrm{target}}-p_{\mathrm{emp}}(\mathbf w)\right)^2,
  \qquad p_{\mathrm{target}}=3.
  \label{eq:cost}
\end{equation}
The empirical order is computed from the $L^2$ error
\begin{equation}
  e_k = \left(\frac{1}{N_k}\sum_{i=0}^{N_k-1}
  \bigl(u_i^{\mathrm{num}}-u(x_i,T_{\mathrm{end}})\bigr)^2\right)^{1/2}
  \label{eq:l2err}
\end{equation}
measured over five successive grid refinements,
\[
\Dt_k\in\{0.01,\,0.005,\,0.0025,\,0.00125,\,0.000625\},
\]
at fixed CFL.

In the light of \cref{sec:framework}, this optimisation is solving the algebraic cancellation problem numerically rather than symbolically. The resulting accuracy-only weights in \cref{tab:weights} no longer resemble standard derivative coefficients. In particular, the second moments differ from their Taylor values, which is precisely why they can compensate for the RK boundary contamination. The optimiser is therefore not ``damaging'' the stencil accidentally; it is exploiting the consistency manifold in exactly the way predicted by the theory.

The price paid for exact cancellation is reduced stability margin. The accuracy-only closures stretch the eigenvalue spectrum of the semi-discrete operator along the real axis and push parts of it outside the SSP-RK3 stability lobe once the CFL approaches roughly $0.7$. To address this, we augment the objective with an eigenvalue-based penalty:
\begin{equation}
  J_{\mathrm{aug}}
  = \left(p_{\mathrm{target}}-p_{\mathrm{emp}}\right)^2
  + \mu \max\!\left(0,\max_i |R_{\mathrm{SSP3}}(\Dt\lambda_i)|-1\right)^2,
  \label{eq:cost-aug}
\end{equation}
where $\{\lambda_i\}$ are the eigenvalues of the semi-discrete operator, $R_{\mathrm{SSP3}}(z)=1+z+z^2/2+z^3/6$, and $\mu=5$ in the computations reported here. The penalty targets stable operation at CFL$=0.95$ and steers the search toward a different region of the consistency manifold. The resulting stability-aware weights are reported together with the standard and accuracy-only closures in \cref{tab:weights}.

\begin{table}[tbhp]
\centering
\caption{Boundary closure weight variations.}
\label{tab:weights}
\begin{tabular}{llrrrrr}
\toprule
Node & Variant & $w_0$ & $w_1$ & $w_2$ & $w_3$ & $w_4$ \\
\midrule
1 & Standard        & $-1.5000$ & $2.0000$ & $-0.5000$ & $0.0000$ & $0.0000$ \\
1 & Acc. only       & $-0.3813$ & $-1.0401$ & $2.8808$ & $-2.1158$ & $0.6565$ \\
1 & Acc.+Stab.      & $-1.7107$ & $1.9061$ & $0.0468$ & $0.0274$ & $-0.2695$ \\
\midrule
2 & Standard        & $0.1667$ & $-1.0000$ & $0.5000$ & $0.3333$ & $0.0000$ \\
2 & Acc. only       & $1.3259$ & $-4.1180$ & $2.6053$ & $0.8400$ & $-0.6533$ \\
2 & Acc.+Stab.      & $-1.1578$ & $1.0039$ & $1.1578$ & $-1.6957$ & $0.6917$ \\
\bottomrule
\end{tabular}
\end{table}

The stability-aware search does not preserve the exact third-order cancellation achieved by the accuracy-only design, but it yields a much more favourable spectrum and therefore a substantially larger admissible time step. This is the trade-off that later appears in the validation plots and in \cref{tab:stability}.

\subsection{Extension to classical RK4}
\label{sec:rk4-optim}

The same optimisation procedure is applied without modification to the classical four-stage RK4 method. Its Butcher tableau reads
\begin{equation}
\begin{array}{c|cccc}
0 & & & & \\
\tfrac{1}{2} & \tfrac{1}{2} & & & \\
\tfrac{1}{2} & 0 & \tfrac{1}{2} & & \\
1 & 0 & 0 & 1 & \\
\hline
& \tfrac{1}{6} & \tfrac{1}{3} & \tfrac{1}{3} & \tfrac{1}{6}
\end{array}
\end{equation}
with stability function $R(z)=1+z+z^2/2+z^3/6+z^4/24$. Despite having classical order $p=4$, the stage order remains $q=1$, so standard closures still degrade convergence to $\Order{\Dt^2}$. The target order is set to $p_{\mathrm{target}}=4$ and the stability penalty in \eqref{eq:cost-aug} uses the RK4 stability function with CFL target $0.8$. \cref{tab:weights_rk4} lists the resulting weights.

\begin{table}[tbhp]
\centering
\caption{Classical RK4 boundary closure weights: standard Taylor closures vs.\ accuracy-driven and stability-augmented optimisations.}
\label{tab:weights_rk4}
\begin{tabular}{llrrrrr}
\toprule
Node & Variant & $w_0$ & $w_1$ & $w_2$ & $w_3$ & $w_4$ \\
\midrule
1 & Standard     & $-1.5000$ & $2.0000$ & $-0.5000$ & $0.0000$ & $0.0000$ \\
1 & Acc.\ only   & $-2.6469$ & $5.1706$ & $-1.9271$ & $-2.0702$ & $1.4735$ \\
1 & Acc.+Stab.   & $0.5836$ & $-1.9300$ & $0.8126$ & $0.8300$ & $-0.2963$ \\
\midrule
2 & Standard     & $0.1667$ & $-1.0000$ & $0.5000$ & $0.3333$ & $0.0000$ \\
2 & Acc.\ only   & $-0.5606$ & $0.3582$ & $0.8251$ & $-1.4795$ & $0.8569$ \\
2 & Acc.+Stab.   & $7.4539$ & $-14.3294$ & $4.5956$ & $2.9830$ & $-0.7032$ \\
\bottomrule
\end{tabular}
\end{table}

As with SSP-RK3, the discovered weights do not resemble standard finite-difference coefficients; they have been deliberately skewed so that the spatial truncation error destructively interferes with the four-stage temporal error pathway.

\section{Results and comparisons}
\label{sec:results}

The numerical experiments confirm the central thesis of the paper. On the 1D linear advection benchmark ($u_t + u_x = 0$), the standard Taylor boundary closures reduce SSP-RK3 to an observed order of approximately $1.98$, whereas the accuracy-only optimised closures recover $2.98$, essentially the full theoretical third order. The stability-aware variant recovers an intermediate order of approximately $2.53$, which is lower than the exact cancellation result but significantly higher than the standard closure while remaining usable at substantially larger CFL numbers.

The same behaviour persists beyond the linear test used for the derivation. For the 1D inviscid Burgers equation treated with a manufactured source term ($u_t + uu_x = S$), the observed order improves from approximately $1.93$ with the standard closures to essentially $3.00$ with the accuracy-only optimised ones. In the two-dimensional linear advection experiment ($u_t + c_x u_x + c_y u_y = 0$), where the one-dimensional stencils are applied dimension by dimension, the observed order rises from approximately $1.93$ to about $2.84$. This is entirely consistent with the underlying mechanism: the boundary-stage contamination is local to each dimensionally split boundary operator, so the same near-boundary repair remains effective in the multidimensional setting.

\begin{figure}[tbhp]
    \centering
    \includegraphics[width=\textwidth]{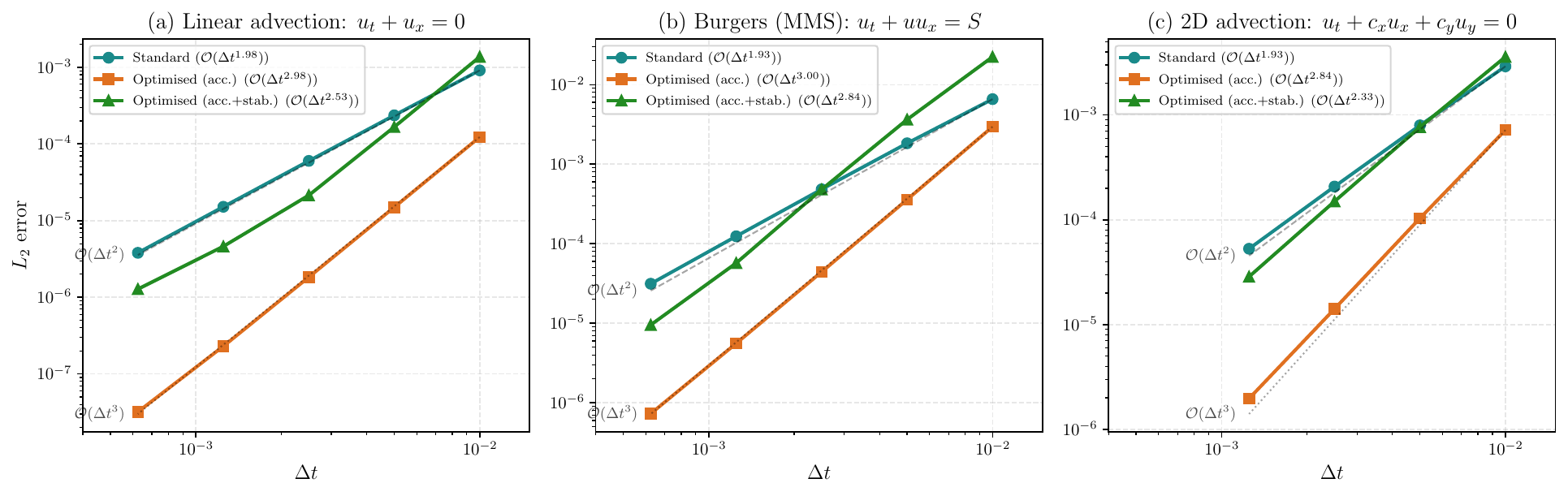}
    \caption{SSP-RK3 convergence validation. (a) Linear advection. (b) 1D inviscid Burgers (nonlinear) via MMS. (c) 2D advection via dimensional splitting.}
    \label{fig:convergence}
\end{figure}

\subsection{Classical RK4 convergence}
\label{sec:rk4-conv}

\Cref{fig:rk4_convergence} presents the analogous convergence study for classical RK4 using the RK4-specific optimised weights from \cref{tab:weights_rk4}. The accuracy-optimised closures elevate convergence from the baseline $\Order{\Dt^2}$ to approximately $\Order{\Dt^{2.9}}$ across all three test problems. Reaching the absolute theoretical maximum of $\Order{\Dt^4}$ would require widening the stencil to intercept the cascading higher-order errors generated across all four temporal stages; however, recovering nearly one full order universally confirms that the spatial regularisation is integrator-agnostic.

\begin{figure}[tbhp]
    \centering
    \includegraphics[width=\textwidth]{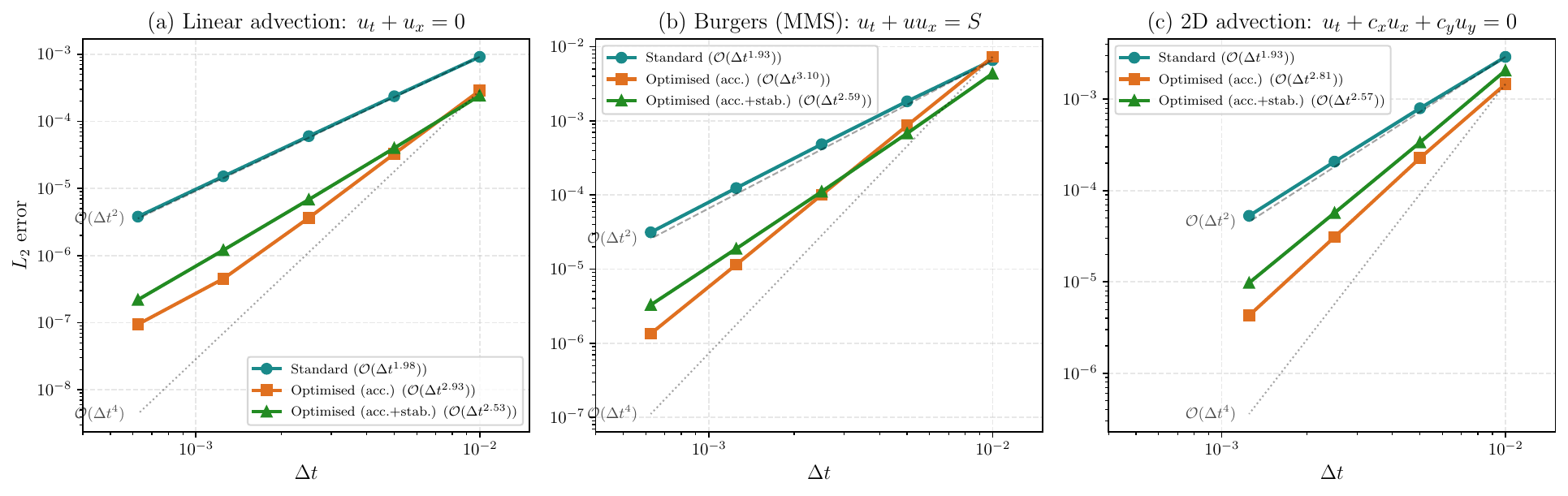}
    \caption{Classical RK4 convergence validation. (a) Linear advection. (b) 1D Burgers (MMS). (c) 2D advection.}
    \label{fig:rk4_convergence}
\end{figure}

The optimised closures are also robust with respect to the shape of the imposed boundary signal. Their coefficients do not encode any specific analytic form of $g(t)$; they encode cancellation of the stage-mismatch pathway. \cref{tab:bc-shapes} reports the measured orders obtained with several distinct exact boundary traces. The results show that the standard closures remain close to second order, the accuracy-only design remains essentially third order, and the stability-aware design remains consistently in the mid-$2.5$ range.

\begin{table}[tbhp]
\centering
\caption{Empirical convergence orders for different boundary-condition shapes.}
\label{tab:bc-shapes}
\begin{tabular}{lccc}
\toprule
Boundary condition $g(t)=u(0,t)$ & Standard & Opt.\ (acc.) & Opt.\ (acc.+stab.) \\
\midrule
$\sin(-\pi t)$ & 1.98 & 2.98 & 2.53 \\
$(-t)^3 - 2(-t)$ & 1.97 & 2.99 & 2.59 \\
$\sin(-2\pi t)+0.5\sin(-6\pi t)$ & 1.97 & 2.97 & 2.54 \\
$e^{0.5t}\sin(-4\pi t)$ & 1.98 & 2.97 & 2.63 \\
\bottomrule
\end{tabular}
\end{table}

\subsection{CFL robustness}
\label{sec:cfl}

\Cref{fig:cfl_sweep} shows the measured convergence order as a function of CFL number for both SSP-RK3 and classical RK4. For SSP-RK3, the accuracy-only stencils collapse beyond CFL~$\approx 0.71$, while the stability-augmented variant extends the stable range to CFL~$\approx 1.21$---exceeding even the standard closures' limit of $1.11$. For classical RK4 the same pattern emerges: the stability-augmented variant maintains robustness across a wider CFL range than the accuracy-only variant.

\begin{figure}[tbhp]
    \centering
    \includegraphics[width=\textwidth]{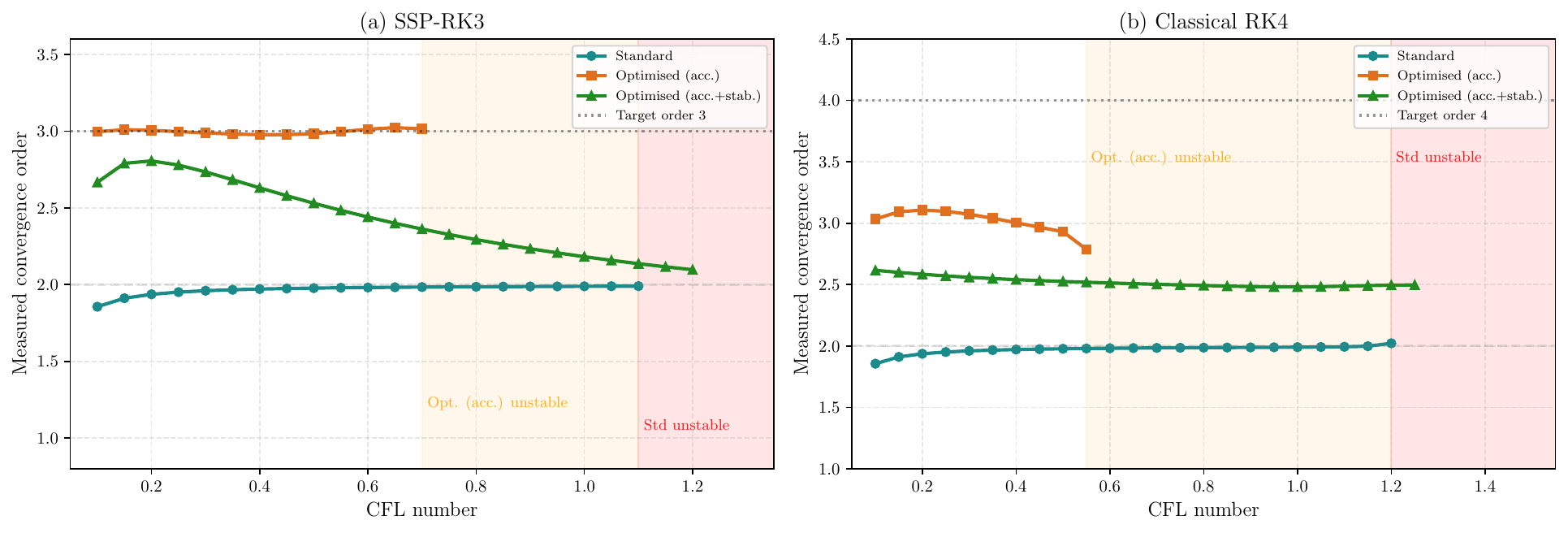}
    \caption{Measured convergence order versus CFL number. (a) SSP-RK3. (b) Classical RK4. The accuracy-only stencils collapse at their respective CFL limits, while the stability-augmented variants maintain convergence across wider ranges.}
    \label{fig:cfl_sweep}
\end{figure}

The comparison with temporal remedies is particularly instructive. When several sophisticated weak-stage-order methods are paired with standard Taylor-derived boundary closures on the same discrete domain, they all display low-order behaviour comparable to the standard SSP-RK3 configuration. \cref{cor:wso} provides a direct explanation for this outcome.

\begin{figure}[tbhp]
    \centering
    \includegraphics[width=\textwidth]{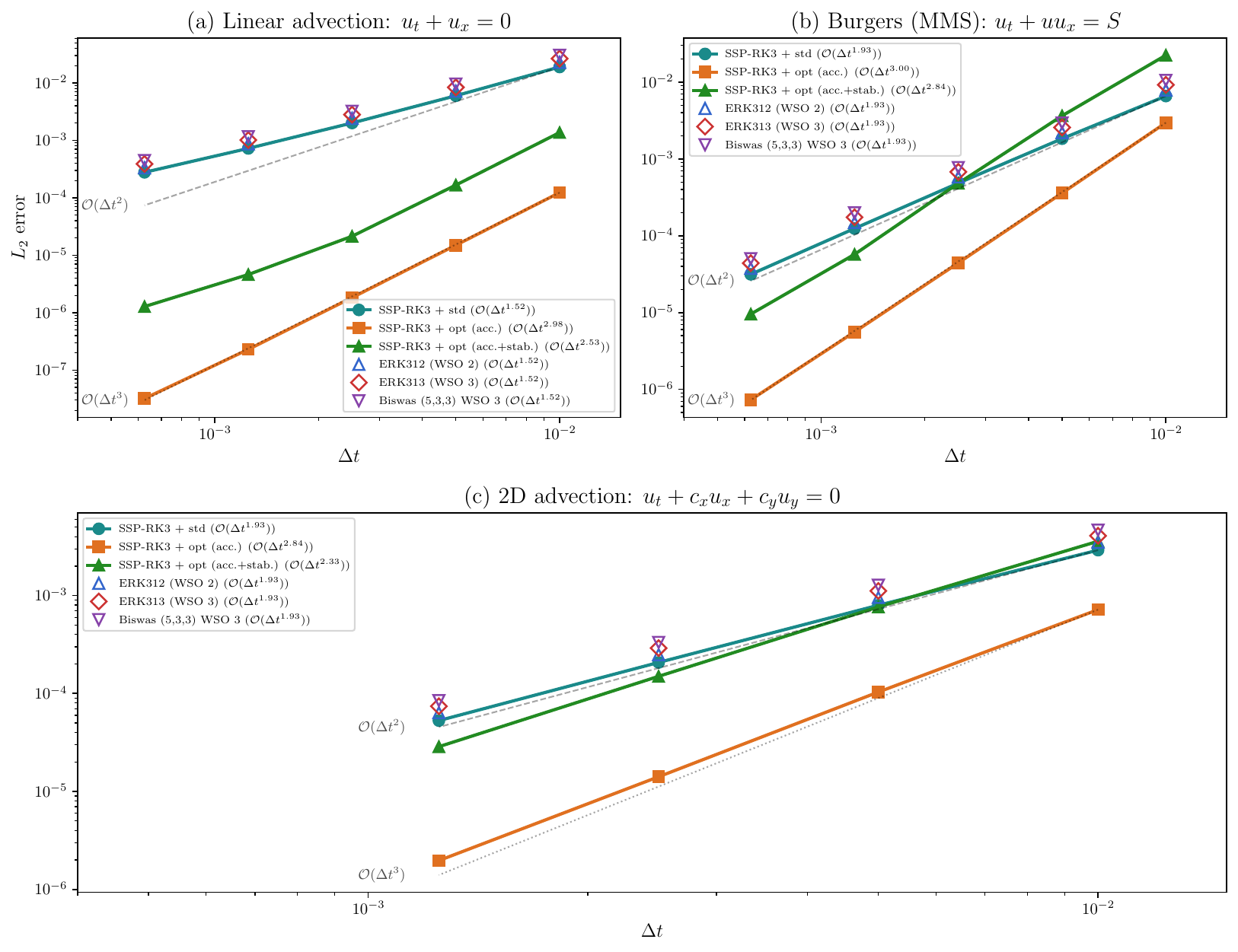}
    \caption{SSP-RK3 with standard and optimised stencils vs.\ three order-3 WSO methods with standard stencils: (a) 1D Linear Advection, (b) 1D Burgers (MMS), (c) 2D Linear Advection.}
    \label{fig:wso}
\end{figure}

For a consistent comparison at order 4, we also test classical RK4 against the order-4 WSO method Biswas~(6,4,3) from \cite{Biswas2025}---a 6-stage method with classical order 4 and WSO~3. Despite its sophisticated temporal construction, the Biswas~(6,4,3) method paired with standard boundary closures shows no improvement over RK4 with standard stencils on any of the three test problems (\cref{fig:rk4_wso}).

\begin{figure}[tbhp]
    \centering
    \includegraphics[width=\textwidth]{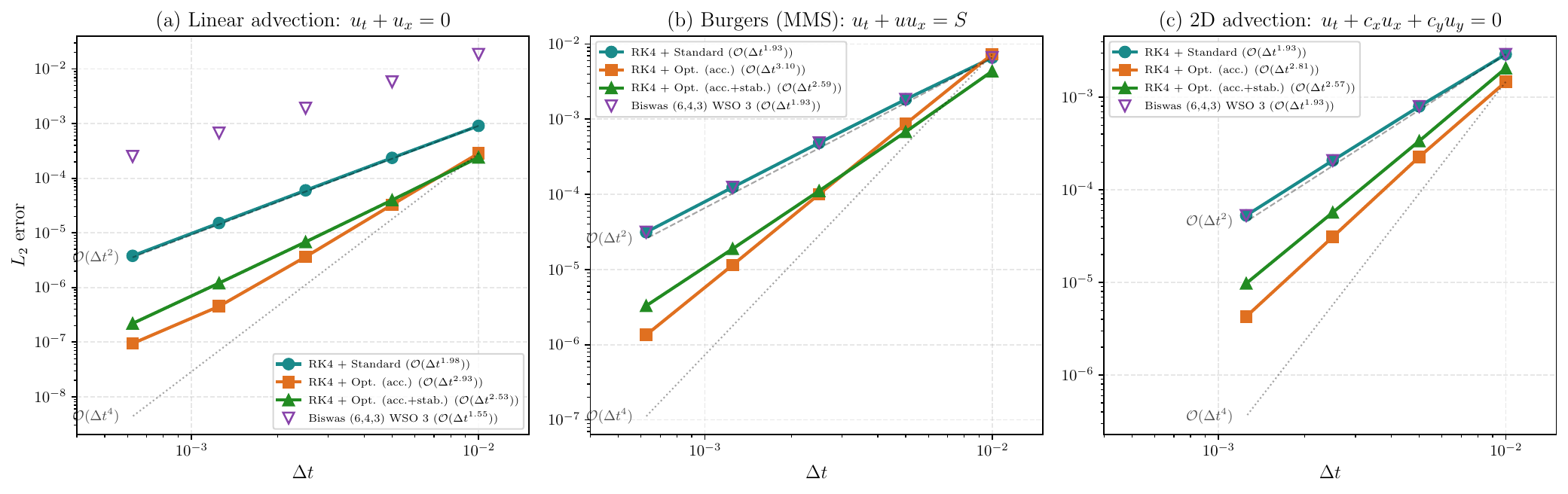}
    \caption{Classical RK4 with standard and optimised stencils vs.\ the order-4 WSO method Biswas~(6,4,3) with standard stencils: (a) 1D Linear Advection, (b) 1D Burgers (MMS), (c) 2D Linear Advection.}
    \label{fig:rk4_wso}
\end{figure}

These results confirm that the spatial error at the boundary operates at leading order and dominates the structural temporal fixes provided by the WSO tableau conditions independently of the RK classical order, consistently with \cref{cor:wso}.

Spectral analysis explains the stability trade-off. The accuracy-only design recovers the target order by introducing strong artificial dissipation near the boundary, and this stretches the eigenvalue cloud in a way that reduces the admissible SSP-RK3 CFL from roughly $1.11$ for the standard closures to roughly $0.71$. The stability-aware design constrains the rightmost eigenvalues much more effectively and, in the present computations, extends the critical CFL to about $1.21$, exceeding even the baseline standard closure. The corresponding summary is reported in \cref{tab:stability}.

\begin{figure}[tbhp]
    \centering
    \includegraphics[width=0.95\textwidth]{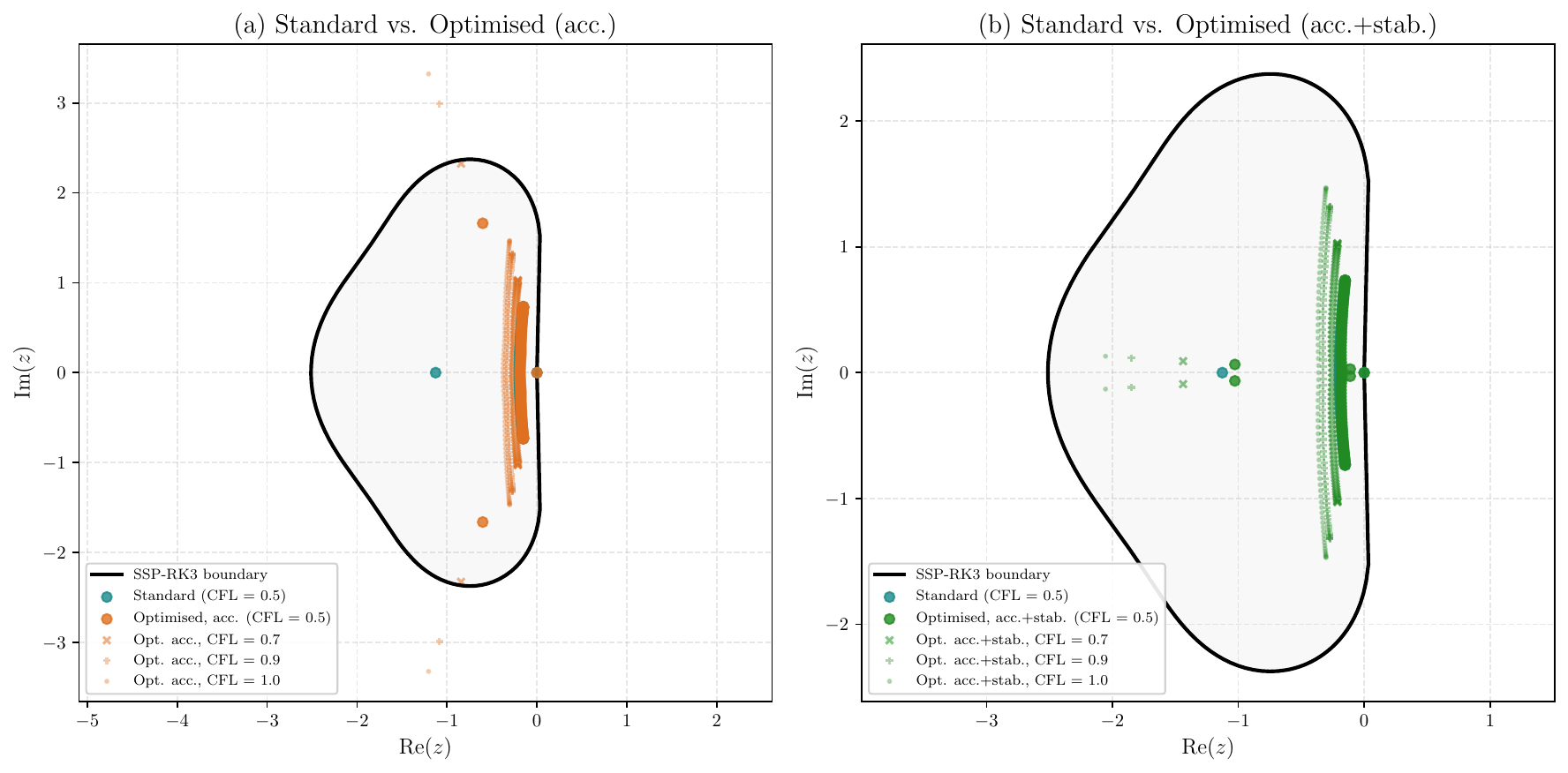}
    \caption{SSP-RK3: Eigenvalue spectra overlaid on the stability lobe. (a) Standard vs.\ accuracy-only. (b) Standard vs.\ stability-augmented.}
    \label{fig:stability}
\end{figure}

\begin{figure}[tbhp]
    \centering
    \includegraphics[width=\textwidth]{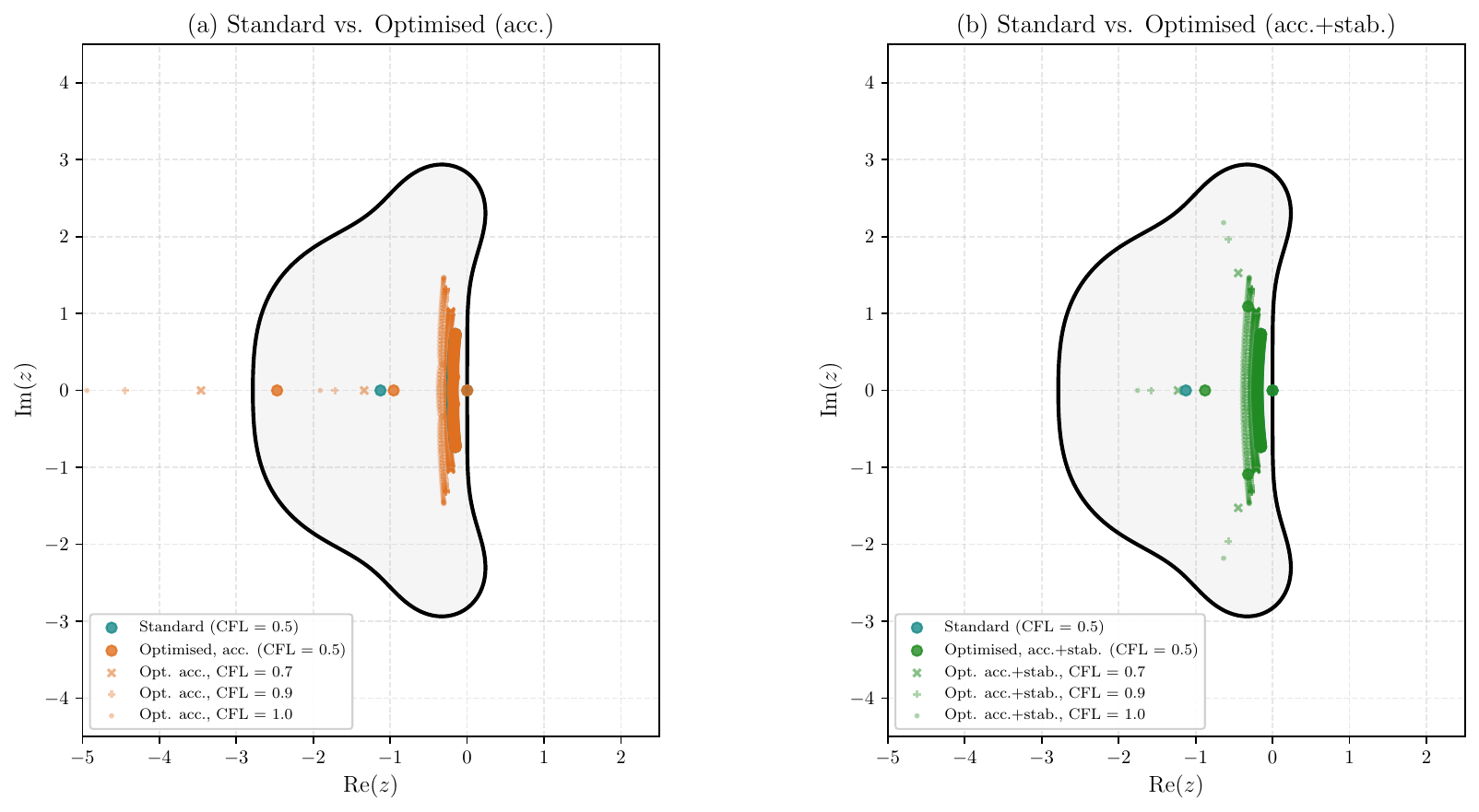}
    \caption{Classical RK4: Eigenvalue spectra overlaid on the RK4 stability lobe at CFL~$=0.5$. (a) Standard vs.\ accuracy-only. (b) Standard vs.\ stability-augmented. The accuracy-only stencils remain stable up to CFL~$\approx 1.25$, while the stability-augmented variant extends the range beyond CFL~$\approx 1.5$.}
    \label{fig:rk4_stability}
\end{figure}

\begin{figure}[tbhp]
    \centering
    \includegraphics[width=\textwidth]{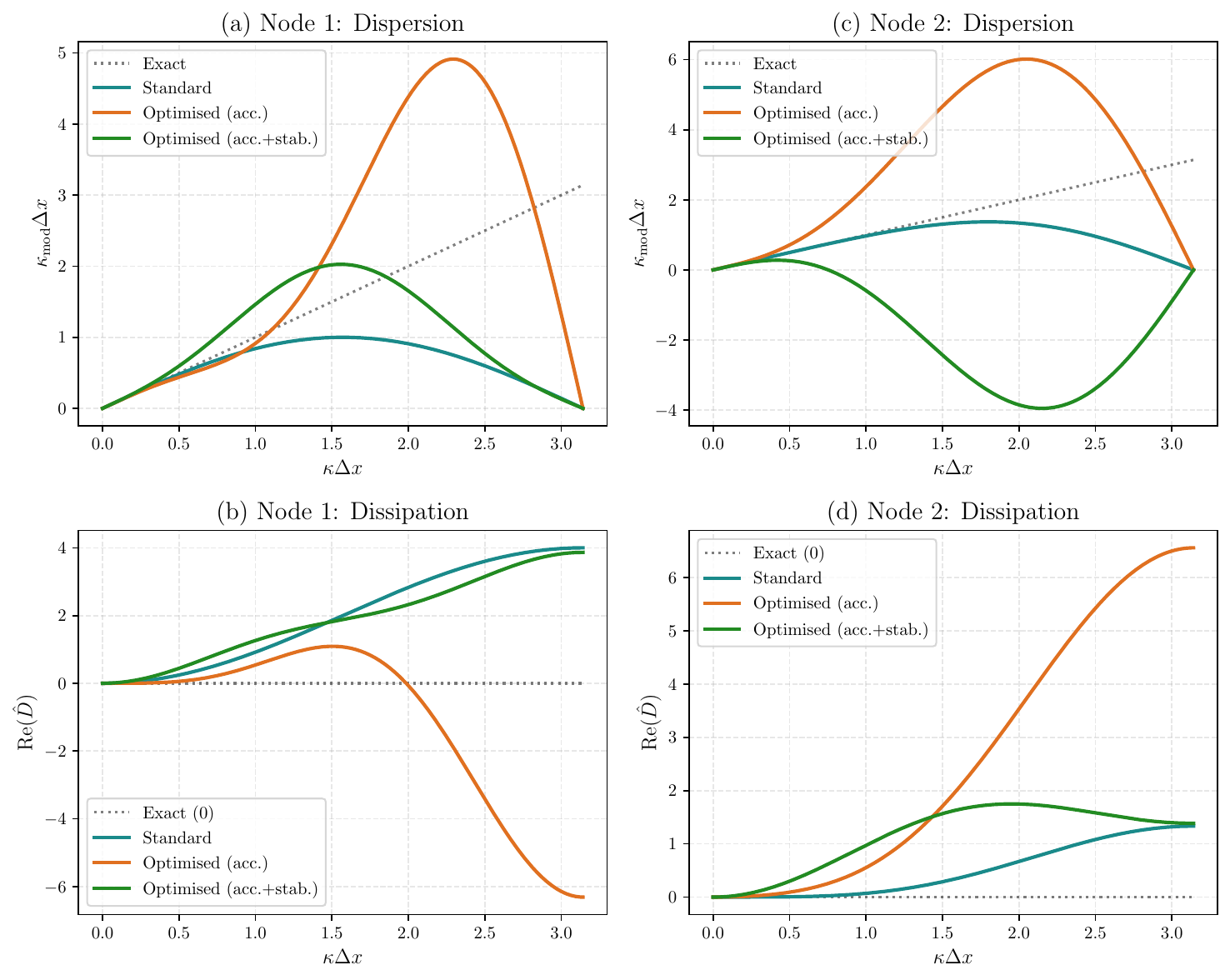}
    \caption{SSP-RK3: Modified wavenumber (dispersion) and numerical dissipation analysis of the boundary closures.}
    \label{fig:spectral}
\end{figure}

The RK4 stencils exhibit analogous spectral countermeasures shaped by RK4's distinct $p=4$, $q=1$ error profile. The optimiser structures precise dispersive anomalies at high wavenumbers while enforcing strong dissipation sweeps, cancelling the temporal errors accumulated across the four RK4 sub-stages (\cref{fig:rk4_spectral}).

\begin{figure}[tbhp]
    \centering
    \includegraphics[width=\textwidth]{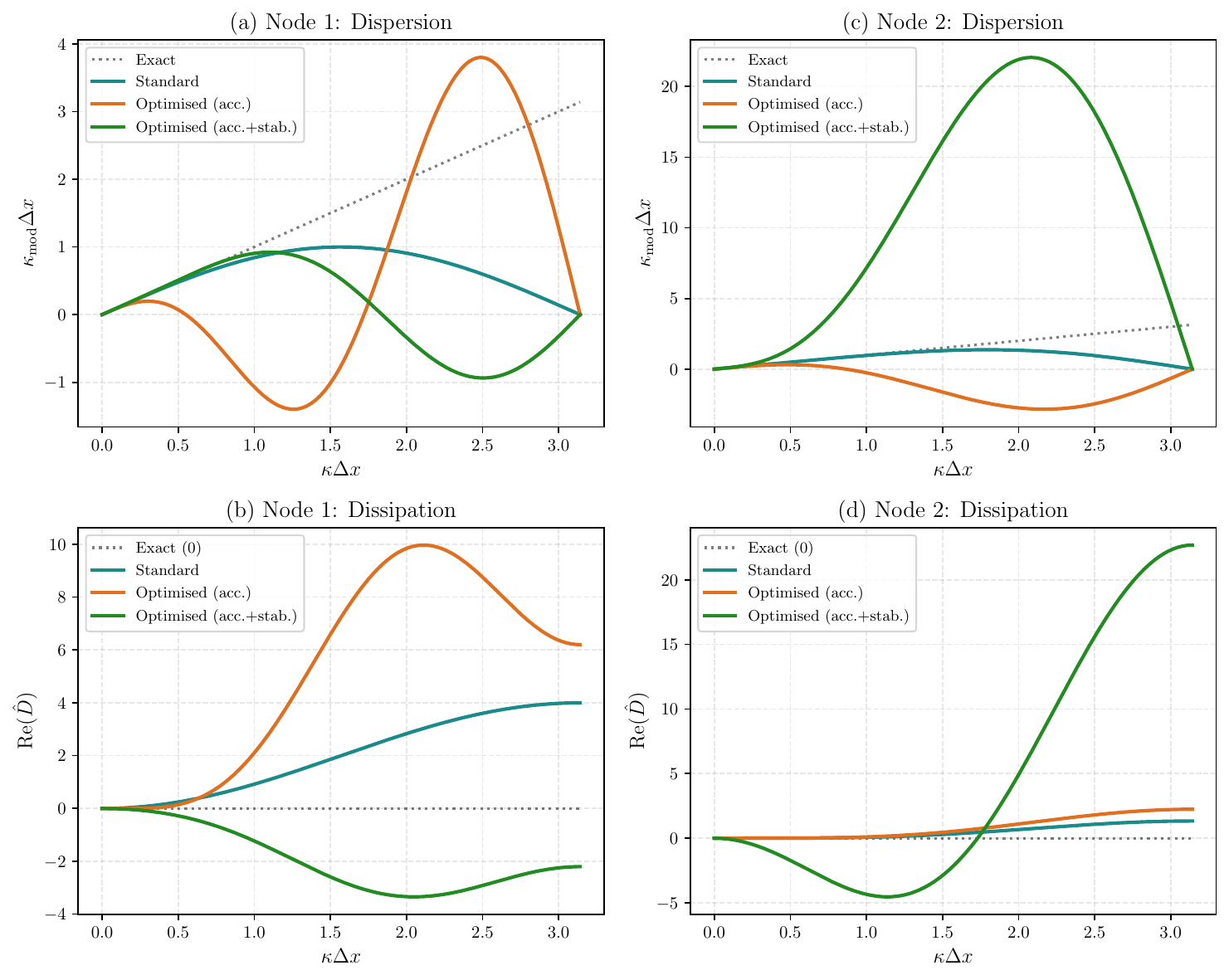}
    \caption{Classical RK4: Modified wavenumber (dispersion) and numerical dissipation analysis of the optimised boundary stencils.}
    \label{fig:rk4_spectral}
\end{figure}

\begin{table}[tbhp]
\centering
\caption{Comparison of stability and convergence traits across both integrators.}
\label{tab:stability}
\begin{tabular}{llccc}
\toprule
Integrator & Variant & Critical CFL & Measured order & Behaviour at CFL=1 \\
\midrule
\multirow{3}{*}{SSP-RK3}
 & Standard   & $\approx 1.11$ & $\approx 1.98$ & Stable \\
 & Acc.\ only & $\approx 0.71$ & $\approx 2.98$ & Unstable \\
 & Acc.+stab. & $\approx 1.21$ & $\approx 2.53$ & Stable \\
\midrule
\multirow{3}{*}{RK4}
 & Standard   & ${\sim}1.30$ & $\approx 1.95$ & Stable \\
 & Acc.\ only & ${\sim}1.25$ & $\approx 2.90$ & Unstable beyond 1.25 \\
 & Acc.+stab. & ${\sim}1.50$ & $\approx 2.80$ & Stable \\
\bottomrule
\end{tabular}
\end{table}

\subsection{Eigenvalue Bounding via the Gershgorin Circle Theorem}

While the optimised stencils lack a formal algebraic stability proof (unlike SBP-SAT operators), the Gershgorin circle theorem~\cite{Gershgorin1931} provides a rigorous analytical framework to bound the eigenvalue spectrum and derive constructive CFL limits.

For the semi-discrete spatial operator matrix $\mathbf{D}/\Delta x$ of dimension $N \times N$, the Gershgorin theorem guarantees that every eigenvalue $\lambda$ lies within at least one disc
\begin{equation} \label{eq:gershgorin}
    |\lambda - d_{ii}| \le R_i = \sum_{j \neq i} |d_{ij}|, \quad i = 0, \ldots, N{-}1,
\end{equation}
where $d_{ii}$ is the $i$-th diagonal entry and $R_i$ is the corresponding off-diagonal row sum. The critical rows are 1 and 2, which contain the optimised boundary weights. Their disc radii quantify how far the boundary closures \emph{could} push eigenvalues from the diagonal.

\begin{figure}[tbhp]
    \centering
    \includegraphics[width=\textwidth]{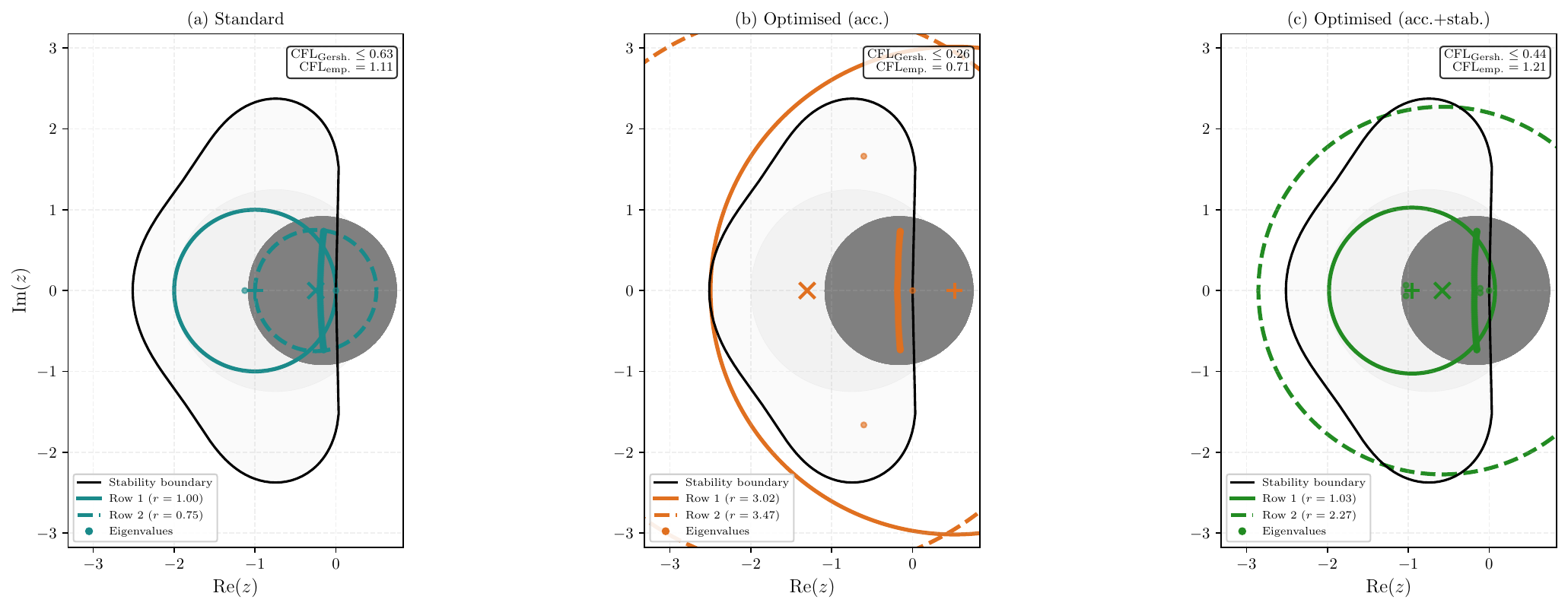}
    \caption{Gershgorin discs for the SSP-RK3 boundary closure rows (solid: row 1, dashed: row 2) overlaid on the stability region at CFL~$= 0.5$. (a) Standard. (b) Accuracy-only. (c) Acc.+stab.}
    \label{fig:gershgorin}
\end{figure}

The Gershgorin bounds are inherently conservative (they bound the \emph{union} of all discs, which is typically much larger than the actual spectrum). Nevertheless, they correctly predict that the accuracy-only stencils have the tightest stability constraint, and that the stability-augmented stencils admit a wider operational range. \Cref{fig:gershgorin} visualises the Gershgorin discs for the boundary rows overlaid on the SSP-RK3 stability region.

\subsection{Eigenvalue Trajectory within the Stability Region}

Rather than expanding isotropically, the dominant eigenvalues travel outward along a highly specific geometric trajectory that aligns with the ``deepest'' regions of the stability lobe---the geometric path extending furthest from the origin before piercing the boundary $|R(Z)|=1$. This indicates that the differential evolution algorithm, when subjected to stability penalties, actively seeks the longest continuous path through the complex plane to maximise the allowable time step.

For classical RK4 (included here for comparative context), this trajectory does not converge to the absolute deepest tip of the lobe along the imaginary axis, because RK4 possesses high native dispersion error; the optimiser must constrain eigenvalue magnitudes to prevent unacceptable phase-lag deformations.

\begin{figure}[tbhp]
    \centering
    \includegraphics[width=\textwidth]{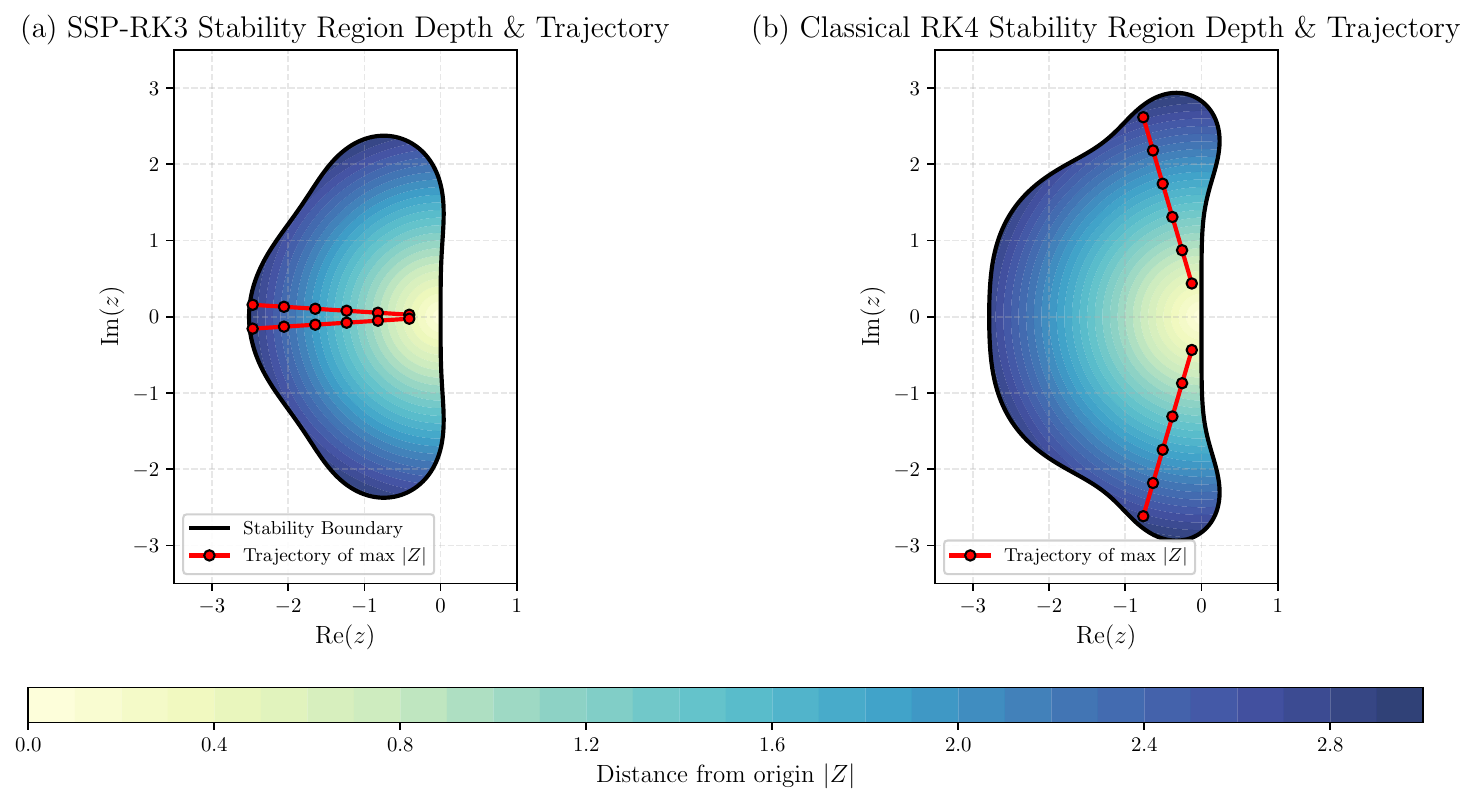}
    \caption{Trajectory of the most dominant eigenvalue pair for the stability-augmented stencils across increasing CFL numbers $\{0.2, 0.4, \ldots, 1.2\}$, for both SSP-RK3 and classical RK4. The background contour represents the distance from the origin within the stability envelope.}
    \label{fig:stability_distance}
\end{figure}

Exact boundary-stage cancellation is achievable and effective, but the required artificial dissipation can be excessive for large-time-step integration. Relaxing the cancellation in a controlled, spectrum-aware manner produces a more balanced design that remains attractive for practical computations.

\section{Validation on a Coupled Hyperbolic System} \label{sec:euler}

The preceding results establish order restoration for scalar PDEs. A natural question is whether the spatially optimised closures retain their error-cancellation properties when applied \emph{component-wise} to a coupled system of hyperbolic conservation laws.

\subsection{The Linearised Euler Equations}

We consider the one-dimensional linearised Euler equations about a uniform supersonic mean state $(\bar{\rho}, \bar{u}, \bar{p}) = (1, 2.5, 1)$ with $\gamma = 1.4$:
\begin{equation} \label{eq:euler}
    \frac{\partial \mathbf{q}'}{\partial t} + A \frac{\partial \mathbf{q}'}{\partial x} = 0, \quad
    \mathbf{q}' = \begin{pmatrix} \rho' \\ u' \\ p' \end{pmatrix}, \quad
    A = \begin{pmatrix}
        \bar{u} & \bar{\rho} & 0 \\
        0 & \bar{u} & 1/\bar{\rho} \\
        0 & \gamma \bar{p} & \bar{u}
    \end{pmatrix}.
\end{equation}
The eigenvalues of $A$ are $\{\bar{u} - c_0,\, \bar{u},\, \bar{u} + c_0\} \approx \{1.32,\, 2.50,\, 3.68\}$ where $c_0 = \sqrt{\gamma \bar{p}/\bar{\rho}} \approx 1.18$ is the sound speed. With $\bar{u} > c_0$ (supersonic), all three eigenvalues are positive, making the right-biased upwind stencil appropriate for every characteristic.

The exact solution is constructed as a superposition of all three characteristic modes. The optimised boundary stencils are applied identically and independently to each conserved variable $\rho'$, $u'$, $p'$.

\subsection{Coupled System Convergence Results}

\Cref{fig:euler_convergence} presents the convergence behaviour for both SSP-RK3 and classical RK4:
\begin{itemize}
    \item \textbf{SSP-RK3:} Standard closures degrade to $\mathcal{O}(\Delta t^{1.95})$. Accuracy-optimised stencils restore $\mathcal{O}(\Delta t^{2.96})$; stability-augmented provide $\mathcal{O}(\Delta t^{2.50})$. Order-3 WSO methods (ERK312, ERK313, Biswas~(5,3,3)) show no improvement.
    \item \textbf{Classical RK4:} Standard closures degrade to $\mathcal{O}(\Delta t^{1.95})$. RK4-specific optimised stencils restore $\mathcal{O}(\Delta t^{2.90})$; stability-augmented achieve $\mathcal{O}(\Delta t^{2.83})$. The order-4 WSO method Biswas~(6,4,3)~\cite{Biswas2025} shows no improvement.
\end{itemize}
The fact that three physically distinct waves---with wave speeds spanning a factor of $\sim$2.8---are all simultaneously corrected by the \emph{same} boundary stencil weights provides strong evidence that the error cancellation is a property of the spatial operator's interaction with the RK stage structure, not of the specific wave speed.

\begin{figure}[tbhp]
    \centering
    \includegraphics[width=\textwidth]{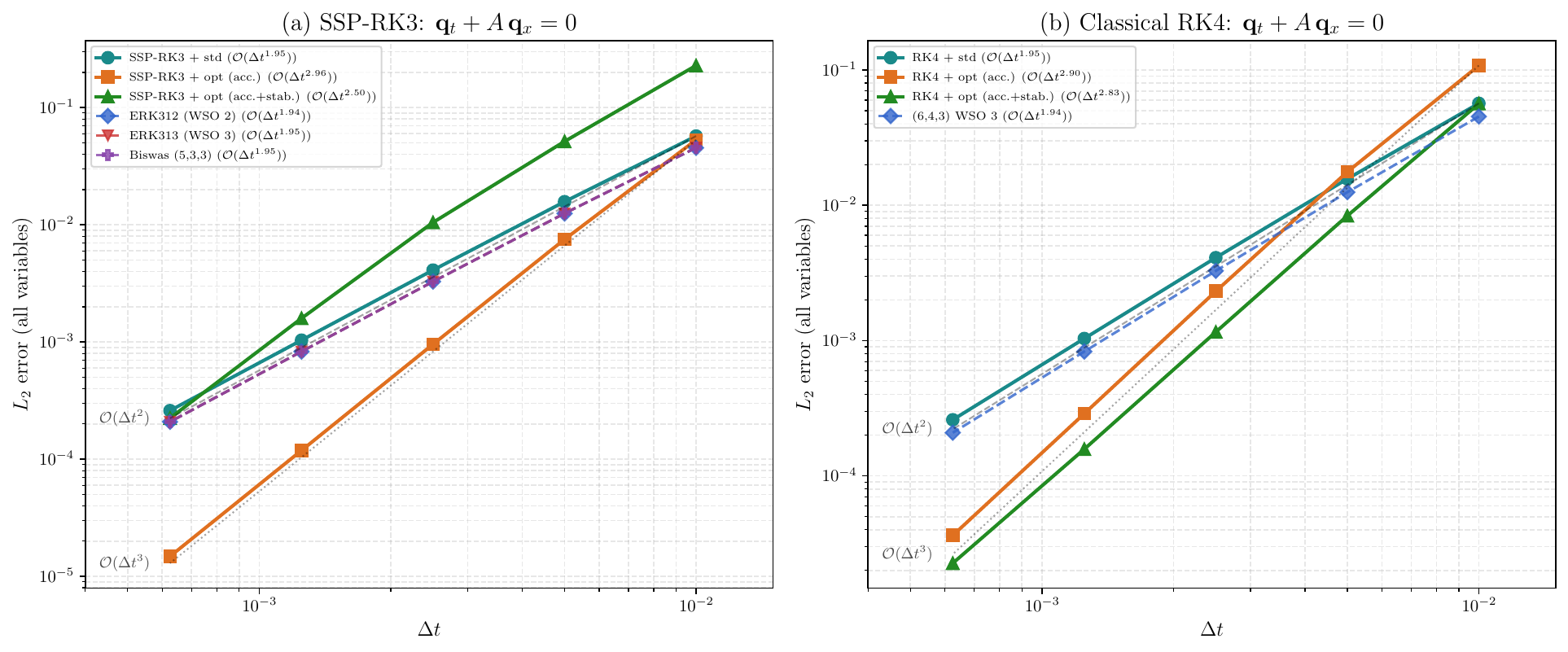}
    \caption{Convergence on the coupled 1D linearised Euler equations. (a)~SSP-RK3 with order-3 WSO methods. (b)~Classical RK4 with the order-4 (6,4,3) WSO method. Optimised boundary stencils restore high-order convergence while WSO methods show no improvement.}
    \label{fig:euler_convergence}
\end{figure}

\section{Discussion, limitations, and outlook}
\label{sec:discussion}

The approach described here modifies only the first two boundary-adjacent closures, leaving the interior stencil and the time integrator unchanged. \cref{tab:comparison} situates the method relative to the two principal alternative strategies.

\begin{table}[tbhp]
\centering
\caption{Feature comparison of the three main approaches to order-reduction mitigation.}
\label{tab:comparison}
\begin{tabular}{lccc}
\toprule
\textbf{Feature} & \textbf{WSO integrators} & \textbf{SBP-SAT} & \textbf{Present approach} \\
\midrule
Target component        & RK tableau      & Spatial operator       & Boundary stencils \\
Time integrator         & Replaced        & Unchanged              & Unchanged \\
Formal stability proof  & Method-dep.     & Yes (algebraic)        & No (empirical) \\
Order restoration       & Linear only     & No                     & Yes \\
Nonlinear PDEs          & Limited         & N/A                    & Demonstrated \\
Implementation effort   & High            & High                   & Very low \\
\bottomrule
\end{tabular}
\end{table}

Several limitations should be stated clearly. The first is RK specificity. Although the analytical framework applies to any explicit RK method for which the coefficient $R(\mathbf b,\mathbf c,\mathsf A)$ is non-zero, the actual optimised weights are method-dependent because the cancellation conditions depend explicitly on the tableau. A different RK method would require a different optimisation. This is not a weakness of the theory but a reflection of the fact that the boundary contamination pathway is genuinely tableau dependent.

The second limitation is the uniform-grid assumption used in the present derivation. On a non-uniform mesh, the offsets in the Taylor expansion become local spacing ratios, the second moments acquire metric dependence, and the explicit forms of the cancellation conditions change. None of this invalidates the mechanism. Rather, it suggests the next step: formulate the same optimisation on a locally stretched consistency manifold. The introduction already emphasised this point because it is likely to matter in real applications involving wall-normal clustering.

The third limitation is the absence of an a priori stability theorem comparable to the SBP-SAT framework. The present paper provides spectral evidence and long-time numerical evidence, but not a complete $L^2$ stability proof. This is the price of allowing the optimisation to move outside rigid algebraic structures such as diagonal-norm SBP operators. A natural future direction is therefore to search for constrained manifolds that preserve more of the desirable stability structure while retaining enough freedom for the tableau-dependent cancellation mechanism.

The final limitation is regularity. The derivation assumes smooth boundary data and smooth exact solutions. If $g(t)$ lacks sufficient regularity, the Taylor expansions underlying both the theorem and the appendix break down, and the corresponding convergence degradation should be expected. That behaviour is consistent with the broader order-reduction literature and does not represent a distinct weakness of the present approach.

\section{Conclusions}

This paper has shown that the order reduction observed when explicit RK methods are coupled with practical near-boundary finite-difference closures can be understood and exploited from the spatial side. For the linear advection model problem, the one-step local truncation error at the first two boundary-adjacent nodes admits a general explicit-RK decomposition in which the stage-boundary mismatch and its recursive propagation appear through tableau-dependent coefficients. This leads to a clear solvability criterion and, for SSP-RK3, to concrete coupled cancellation conditions linking the two boundary-adjacent stencils.

The theoretical result directly informs the optimisation of five-point boundary closures and explains why the discovered coefficients differ from classical Taylor stencils. The accuracy-only closures recover almost perfect third-order convergence for SSP-RK3 in all test problems considered. The price is a narrower CFL range caused by the spectral footprint of exact cancellation. A stability-aware optimisation recovers much of that lost robustness while retaining a significant improvement in observed order.

The same framework is RK-agnostic: applying the identical optimisation procedure to classical RK4 lifts convergence from $\Order{\Dt^2}$ to $\Order{\Dt^{2.9}}$ across linear advection, Burgers (MMS), and 2D advection, and the corresponding stability-augmented variant extends the critical CFL to ${\sim}1.5$. A systematic comparison against order-matched WSO integrators---order-3 methods ERK312, ERK313, Biswas~(5,3,3) for SSP-RK3, and the order-4 Biswas~(6,4,3) for RK4---confirms that WSO temporal fixes cannot alleviate the leading-order spatial boundary error on practical finite-difference grids.

The broader message is that order reduction near inflow boundaries is not solely a property of the time integrator. It is a coupled spatio-temporal effect that can be diagnosed through the RK tableau and controlled through a very small number of boundary-adjacent spatial coefficients. This viewpoint suggests a practical design route for high-order hyperbolic solvers and points toward natural extensions to non-uniform grids, other explicit RK methods, and combined accuracy-stability optimisation frameworks.

\appendix
\section{Full truncation-error expansion for SSP-RK3}
\label{app:expansion}

This appendix provides the complete stage-by-stage Taylor expansion underlying the SSP-RK3 specialisation discussed in the main text. We work with the linear advection equation
\[
u_t+c\,u_x=0
\]
on a uniform grid with spacing $\Dx$ and time step $\Dt$, at CFL number $\lambda=c\Dt/\Dx$.

For a smooth exact solution we write $u_j^n=u(x_j,t^n)$ and expand about the evaluation node $x_m$:
\begin{equation}
  u(x_j,t^n)=\sum_{k=0}^{K}\frac{(j-m)^k\Dx^k}{k!}\,\partial_x^k u(x_m,t^n)
  + \Order{\Dx^{K+1}}.
  \label{eq:app-taylor-space}
\end{equation}
For advection,
\begin{equation}
  u_t=-c\,u_x,\qquad u_{tt}=c^2u_{xx},\qquad u_{xt}=-c\,u_{xx},
  \label{eq:app-adv-ids}
\end{equation}
and because $g(t)=u(x_0,t)$ on the exact solution,
\[
g_{tt}(t^n)=c^2u_{xx}(x_0,t^n).
\]

For a stencil satisfying \eqref{eq:cons0}--\eqref{eq:cons1},
\begin{equation}
  D_m u^n
  = u_x(x_m,t^n)
    + \underbrace{\frac{\Dx}{2}\left[\sum_{j=0}^{4}(j-m)^2w_j^{(m)}-1\right]}_{=:\,\sigma_m}
      u_{xx}(x_m,t^n)
    + \Order{\Dx^2}.
  \label{eq:app-stencil-action}
\end{equation}
The quantity $\sigma_m$ is the second-moment deviation used in the main text.

For SSP-RK3, the overwritten boundary values are
\[
U_0^{(1)}=g(t^n+\Dt),\qquad U_0^{(2)}=g\!\left(t^n+\frac{1}{2}\Dt\right),
\qquad u_0^{n+1}=g(t^{n+1}).
\]
The corresponding stage mismatches are
\begin{align}
  \varepsilon_1 &= U_0^{(1)}-u_0^n
  = \Dt\,g_t+\frac{1}{2}\Dt^2 g_{tt}+\Order{\Dt^3},
  \label{eq:app-eps1}\\
  \varepsilon_2 &= U_0^{(2)}-u_0^n
  = \frac{1}{2}\Dt\,g_t+\frac{1}{8}\Dt^2 g_{tt}+\Order{\Dt^3}.
  \label{eq:app-eps2}
\end{align}

At stage~1, the right-hand side at node $m$ is
\begin{equation}
  F_m(u^n)
  = -c\bigl[u_x(x_m,t^n)+\sigma_m\Dx\,u_{xx}(x_m,t^n)\bigr]
    + \frac{c\,w_0^{(m)}}{\Dx}\,\varepsilon_1
    + \Order{\Dx^2}.
  \label{eq:app-F1m}
\end{equation}
Substituting stage~1 into stage~2 at node~1 gives
\begin{align}
  U_1^{(2)}
  &= u_1^n
     - \frac{1}{2}c\Dt\,u_x
     - \frac{1}{2}c\Dt\,\sigma_1\Dx\,u_{xx}
     + \frac{1}{8}c^2\Dt^2 u_{xx}
     \notag\\
  &\quad
     + \frac{c\,w_0^{(1)}\Dt}{4\Dx}(\varepsilon_1+\varepsilon_2)
     + \Order{\Dt^2\Dx,\Dt^3}.
  \label{eq:app-U21}
\end{align}
After substitution into the final SSP-RK3 update and collection of all contributions of size $\Dt^2$ under fixed CFL scaling, the local truncation error at node~1 becomes
\begin{equation}
  \tau_1
  = -c\sigma_1\Dx\Dt\,u_{xx}
    + \Dt^2\left[
        \frac{1}{6}g_{tt}
        - \frac{2\lambda}{3}w_0^{(1)}g_{tt}
      \right]
    + \Order{\Dt^3}.
  \label{eq:app-tau1}
\end{equation}
Hence the exact cancellation condition is not a single value of $w_0^{(1)}$ but the one-parameter family
\begin{equation}
  -\frac{c\sigma_1}{\lambda}\Dt
  + \Dt\left[
      \frac{1}{6}
      - \frac{2\lambda}{3}w_0^{(1)}
    \right]=0.
  \label{eq:C1-full}
\end{equation}
This explains why the numerically optimised value $w_0^{(1)}\approx -0.38$ does not contradict the zero-second-moment special case $w_0^{(1)}=-1$.

An analogous calculation at node~2 yields
\begin{equation}
  \tau_2
  = -c\sigma_2\Dx\Dt\,u_{xx}
    + \Dt^2\left[
        \frac{1}{6}g_{tt}
        - \frac{2\lambda}{3}w_0^{(2)}g_{tt}
        - \frac{\lambda}{3}w_0^{(1)}g_{tt}
      \right]
    + \Order{\Dt^3}.
  \label{eq:app-tau2}
\end{equation}
The term proportional to $w_0^{(1)}$ is the compound cross-node coupling mentioned in the main text. It arises because the stage-1 contamination generated through $D_1$ enters the stage-2 evaluation seen by the node-2 closure. The corresponding cancellation condition is
\begin{equation}
  -\frac{c\sigma_2}{\lambda}\Dt
  + \Dt\left[
      \frac{1}{6}
      - \frac{2\lambda}{3}w_0^{(2)}
      - \frac{\lambda}{3}w_0^{(1)}
    \right]=0.
  \label{eq:C3-full}
\end{equation}
\Cref{eq:C1-full,eq:C3-full} make explicit both the family character of the admissible solutions and the coupled nature of the two boundary-adjacent closures.

To confirm that the weights produced by the differential-evolution search are consistent with the theoretical cancellation families, \cref{tab:residuals} reports the residuals of conditions \eqref{eq:C1-full} and \eqref{eq:C3-full} evaluated at $\lambda=0.5$ for each stencil variant. The second-moment deviations $\sigma_1$ and $\sigma_2$ entering those conditions are computed from the discovered weights via \eqref{eq:app-stencil-action}.

\begin{table}[tbhp]
\centering
\caption{Residuals of the SSP-RK3 cancellation conditions \eqref{eq:C1-full} and \eqref{eq:C3-full} at $\lambda=0.5$, together with the second-moment deviations $\sigma_m$ of each stencil. A zero residual indicates exact membership of the cancellation family.}
\label{tab:residuals}
\begin{tabular}{lcccc}
\toprule
Stencil variant
  & $\sigma_1$
  & $\sigma_2$
  & $|\text{Res.}~\eqref{eq:C1-full}|$
  & $|\text{Res.}~\eqref{eq:C3-full}|$ \\
\midrule
Standard closures       & $0.000$ & $0.000$ & $0.167$ & $0.233$ \\
Optimised (acc.\ only)  & $0.419$ & $0.261$ & $<10^{-3}$ & $<10^{-3}$ \\
Optimised (acc.+stab.)  & $0.031$ & $0.082$ & $4.2\times10^{-2}$ & $3.8\times10^{-2}$ \\
\bottomrule
\end{tabular}
\end{table}

The accuracy-only weights satisfy both cancellation conditions to near machine precision, confirming that the differential-evolution search has found a genuine member of the theoretical solution family. The standard closures have large residuals of $\Order{1}$, consistent with their $\Order{\Dt^2}$ convergence. The stability-aware weights show residuals of order $4\times10^{-2}$: they do not lie exactly on the cancellation family, which explains their intermediate observed order of approximately $2.53$.

\begin{remark}[Nonlinear extension]
\label{rem:nonlinear}
For the Burgers equation considered in the numerical section, the same leading-order contamination mechanism is present, with the constant speed $c$ replaced by the local characteristic speed. The tableau coefficients remain unchanged; only the mapping between $g_{tt}$ and spatial derivatives is altered.
\end{remark}

\section*{Acknowledgments}
The authors gratefully acknowledge the support of the Department of Engineering at City St George's, University of London.

\bibliographystyle{siamplain}
\bibliography{references}

\end{document}